\documentclass[reqno]{amsart}
%\documentclass[reqno,draft]{amsart}
%-----------------------------------------------------------------------------%
\usepackage{amsmath,amssymb,amsfonts,caption}
\usepackage{graphicx}
%-----------------------------------------------------------------------------%
\usepackage{euscript}
\usepackage{enumerate}
\usepackage{verbatim}
\usepackage{epsfig}
%\usepackage{refcheck}
%\usepackage{hyperref}

%-----------------------------------------------------------------------------%
\usepackage{color}
\usepackage[usenames,dvipsnames,svgnames,table]{xcolor}
\definecolor{orange}{rgb}{1,0.5,0}
%-----------------------------------------------------------------------------%
\newtheorem{theorem}{Theorem}
\newtheorem{corollary}{Corollary}
\newtheorem{definition}{Definition}

\newtheorem{lemma}{Lemma}

%-----------------------------------------------------------------------------%

%-----------------------------------------------------------------------------%
\renewcommand{\epsilon}{\varepsilon}
\renewcommand{\phi}{\varphi}
\renewcommand{\le}{\leqslant}
\renewcommand{\ge}{\geqslant}
\newcommand{\eps}{\varepsilon}
\newcommand{\lbd}{\lambda}
\newcommand{\ds}{\, ds}
\newcommand{\dr}{\, dr}

\newcommand{\dint}{\displaystyle \int}

\newcommand{\cR}{\mathcal R}
%-----------------------------------------------------------------------------%

\DeclareMathOperator{\e}{e}

\DeclareMathOperator{\imagem}{Im}
\DeclareMathOperator{\codim}{codim}
%-----------------------------------------------------------------------------%

%-----------------------------------------------------------------------------%
\usepackage{dsfont}
   \newcommand{\N}{\ensuremath{\mathds N}}
   \newcommand{\R}{\ensuremath{\mathds R}}

%-----------------------------------------------------------------------------

%-----------------------------------------------------------------------------%
\begin{document}
%-----------------------------------------------------------------------------%
\title[]
   {Periodic orbits for periodic eco-epidemiological systems with infected prey}
\author{Lopo F. de Jesus}
\address{L. F. de Jesus
   Departamento de Matem\'atica\\
   Universidade da Beira Interior\\
   6201-001 Covilh\~a\\
   Portugal}
   \email{lopo.jesus@ubi.pt}
\author{C\'esar M. Silva}
\address{C. M. Silva\\
   Departamento de Matem\'atica\\
   Universidade da Beira Interior\\
   6201-001 Covilh\~a\\
   Portugal}
\email{csilva@ubi.pt}
\author{Helder Vilarinho}
\address{H. Vilarinho\\
   Departamento de Matem\'atica\\
   Universidade da Beira Interior\\
   6201-001 Covilh\~a\\
   Portugal}
\email{helder@ubi.pt}
\date{\today}
\thanks{L. de Jesus, C. M. Silva and H. Vilarinho were partially supported by FCT through CMA-UBI (project UIDB/MAT/00212/2020).}
\keywords{Eco-epidemiological system; periodic orbit; persistence.}
%-----------------------------------------------------------------------------%
\begin{abstract}
We address the existence of periodic orbits for periodic eco-epide\-mi\-ological system with disease in the prey. To do it, we consider three main steps. Firstly we study a one parameter family of systems and obtain uniform bounds for the components of any periodic solution of these systems. Next, we make a suitable change of variables in our family of systems to establish the setting where we are able to apply Mawhin's continuation Theorem. Finally, we use Mawhin's continuation Theorem to obtain our result. Later on, we present two examples that include previous results in the literature and  some numerical simulations to illustrate our results.
\end{abstract}
%-----------------------------------------------------------------------------%
\maketitle

%-----------------------------------------------------------------------------%
\section{Introduction}
%-----------------------------------------------------------------------------%
Eco-epidemiological models are ecological models that include infected compartments. In many situations, these models describe more accurately the real ecological system than models where the disease is not taken into account.

There is already a large number of works concerning eco-epidemiological models. To mention just a few recent works, we refer~\cite{Chakraborty-Das-Haldar-Kar-2015} where a mathematical study on disease persistence and extinction is carried out;~\cite{Bai-Xo-2018} where the authors study the global stability of a delayed eco-epidemiological model with holling type III functional response, and~\cite{Purnomo-Darti-Suryanto-2017} where an eco-epidemiological model with harvesting is considered.

One of the main concerns when studying eco-epidemiological models is to determine conditions under which one can predict if the disease persists or dies out. In mathematical epidemiology, these conditions are usually given in terms of the so called basic reproduction ratio $\mathcal{R}_0$, defined in~\cite{Diekmann-Heesterbeek-Metz-1990} for autonomous systems as the spectral radius of the next generation matrix.

In~\cite{Bacaer-Guernaoui-2006}, $\mathcal{R}_0$ was introduced for the periodic models, and later on, in~\cite{Wang-Zhao-JDDE-2008}, the definition of $\mathcal{R}_0$ was adapted to the study of periodic patchy models. In the recent article~\cite{Garrione-Rebelo-NARWA-2017} the theory in~\cite{Wang-Zhao-JDDE-2008} was used in the study of persistence of the predator in a general periodic predator-prey models.

When persistence is guaranteed, the obtention of conditions that assure the existence of periodic orbits for periodic eco-epidemiological models is an important issue in the deepening of the description of these models since these orbits correspond to situations where possibly there is some equilibrium in the described ecological system, reflected in the fact that the behaviour of the theoretical model is the same over the years. In~\cite{Silva-JMAA-2017} it was proved that there is an endemic periodic orbit for the periodic version of the model considered in~\cite{Niu-Zhang-Teng-AMM-2011} when the infected prey is permanent and some additional conditions are fulfilled, partially giving a positive answer to a conjecture in this last paper.

The models in~\cite{Niu-Zhang-Teng-AMM-2011} and~\cite{Silva-JMAA-2017} assume that there is no predation on uninfected preys. In spite of that, this assumption is not suitable for the description of many eco-epidemiological models. The main purpose of this paper is to present some results on the existence of an endemic periodic orbit for periodic eco-epidemiological systems with disease in the prey that generalize the systems in~\cite{Niu-Zhang-Teng-AMM-2011} and \cite{Silva-JMAA-2017} by including in the model a general function corresponding to the predation of uninfected preys. The proof of our result relies on Mawhin's continuation theorem. Following the approach in~\cite{Silva-JMAA-2017}, we begin by locating the components of possible periodic orbits for the one parameter family of systems that arise in Mawhin's result, allowing us to check that the conditions of that theorem are fulfilled. Although the main steps in our proof correspond to the ones in~\cite{Silva-JMAA-2017}, several additional nontrivial arguments are needed in our case. Additionally, there is also a substantial difference between our approach and the one in~\cite{Niu-Zhang-Teng-AMM-2011,Silva-JMAA-2017}.
In fact, we take as a departure point some prescribed behaviour of the uninfected subsystem, corresponding to the dynamics of preys and predators in the absence of disease: we will assume in this work that we have global asymptotic stability of solutions of some special perturbations of the bidimensional predator-prey system (the system obtained by letting $I=0$ in the first and third equations in~\eqref{eq:principal}). Thus, when applying our results to particular situations, one must verify that the underlying uninfected subsystem satisfies our assumptions. On the other hand, our approach allows us to construct an eco-epidemiological model from a previously studied predator-prey model (the uninfected subsystem) that satisfies our assumptions. This approach has the advantage of highlighting the link between the dynamics of the eco-epidemiological model and the dynamics of the predator-prey model used in its construction.

%-----------------------------------------------------------------------------%
\section{A general eco-epidemiological model with disease in prey}
%-----------------------------------------------------------------------------%

As a generalization of the model considered in~\cite{Silva-JMAA-2017}, a periodic version of the general non-autonomous model introduced in~\cite{Niu-Zhang-Teng-AMM-2011}, we consider the following periodic
eco-epidemiological model:
\begin{equation}\label{eq:principal}
\begin{cases}
S'=\Lambda(t)-\mu(t)S-a(t)f(S,P)P-\beta(t)SI\\
I'=\beta(t)SI-\eta(t)PI-c(t)I\\
P'=(r(t)-b(t)P)P+\gamma(t)a(t)f(S,P)P+\theta(t)\eta(t)PI
\end{cases},
\end{equation}
where $S$, $I$ and $P$ correspond, respectively, to the susceptible prey, infected prey and predator, $\Lambda(t)$ is the recruited rate of the prey population, $\mu(t)$ is the natural death rate of the prey population, $a(t)$ predation rate of susceptible prey, $\beta(t)$ is the incidence rate, $\eta(t)$ is the predation rate of infected prey, $c(t)$ is the death rate in the infective class ($c(t) \ge \mu(t)$), $\gamma(t)$ is the
rate converting susceptible prey into predator (biomass transfer), $\theta(t)$ is the rate of converting infected prey into predator, $r(t)$ and $b(t)$ are parameters related the vital dynamics of the predator population that is assumed to follow a logistic law and includes the intra-
specific competition between predators. It is assumed that only susceptible preys $S$ are capable of reproducing, i.e, the infected prey is removed by death (including natural and disease-related death) or by predation before having the possibility of reproducing.

Given a $\omega$-periodic function $f$ we will use throughout the paper the notations $f^\ell=\inf_{t \in (0,\omega]} f(t)$, $f^u=\sup_{t \in (0,\omega]} f(t)$ and $\bar{f}=\frac{1}{\omega} \int_0^\omega f(s) \ds$.
We will assume the following structural hypothesis concerning the parameter functions and the function $f$ appearing in our model:
\begin{enumerate}[S$1$)]
\item \label{cond-1} The real valued functions $\Lambda$, $\mu$, $\beta$, $\eta$, $c$, $\gamma$, $r$, $\theta$ and $b$ are periodic with period $\omega$, nonnegative and continuous;
\item \label{cond-2} Function $f$ is nonnegative and continuous;
\item \label{cond-3} Function $x \mapsto f(x,z)$ is nondecreasing;
\item \label{cond-4} Function $z \mapsto f(x,z)$ is nonincreasing;
\item \label{cond-5} For all $(x,z)$ we have $\bar\beta \frac{\partial f}{\partial x}(x,z)+\eta \frac{\partial f}{\partial z}(x,z)\ge 0$;
\item \label{cond-6} For any $C_1,C_2>0$, function $x\mapsto f(C_1 x+C_2,x)$ is nondecreasing;
\item \label{cond-7} $\bar\Lambda>0$, $\bar\mu>0$, $\bar r>0$ and $\bar b>0$.
\item \label{cond-8} There is $\alpha\ge 1$ and $K>0$ such that $f(x,0) \le Kx^\alpha$.
\end{enumerate}

To formulate our next assumptions we need to consider two auxiliary equations and one auxiliary system. First, for each $\lambda \in (0,1]$, we need to consider the following equations:
\begin{equation}\label{eq:auxiliary-S(t)}
x'=\lbd(\Lambda(t)-\mu(t)x)
\end{equation}
and
\begin{equation}\label{eq:auxiliary-P(t)}
z'=\lbd(r(t)-b(t)z)z.
\end{equation}
Note that, if we identify $x$ with the susceptible prey population, equation~\ref{eq:auxiliary-S(t)} gives the behavior of the susceptible preys in the absence of infected preys and predator and identifying $z$ with the predator population, equation~\ref{eq:auxiliary-P(t)} gives the behavior of the predator in the absence of preys.

Equations~\eqref{eq:auxiliary-S(t)} and~\eqref{eq:auxiliary-P(t)} have a well known behavior, given in the following lemmas:
\begin{lemma}[Lemma 1 in~\cite{Niu-Zhang-Teng-AMM-2011}]\label{lm1-1dim-1}
For each $\lbd \in (0,1]$ there is a unique $\omega$-periodic solution of equation~\eqref{eq:auxiliary-S(t)}, $x_\lbd^*(t)$, that is globally asymptotically stable in $\R^+$.
\end{lemma}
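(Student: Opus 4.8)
The plan is to prove Lemma~\ref{lm1-1dim-1} by studying the linear non-autonomous scalar equation~\eqref{eq:auxiliary-S(t)} explicitly via an integrating factor, and then exhibiting the unique $\omega$-periodic solution through a fixed-point argument on the Poincar\'e (period) map. First I would fix $\lbd \in (0,1]$ and observe that \eqref{eq:auxiliary-S(t)} is linear: writing $m_\lbd(t) = \lbd\mu(t)$, every solution with $x(0)=x_0$ is
\[
x(t) = e^{-\int_0^t m_\lbd(s)\ds}\,x_0 + \lbd\int_0^t e^{-\int_s^t m_\lbd(u)\du}\,\Lambda(s)\ds.
\]
Since $\Lambda,\mu$ are continuous and $\omega$-periodic with $\bar\mu>0$ (hypothesis S\ref{cond-7}), we have $\int_0^\omega m_\lbd(s)\ds = \lbd\omega\bar\mu > 0$, so the period map $x_0 \mapsto x(\omega) =: P_\lbd(x_0)$ is an affine contraction with slope $\rho_\lbd := e^{-\lbd\omega\bar\mu} \in (0,1)$.

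Next I would extract the periodic solution. The contraction $P_\lbd$ has a unique fixed point $x_\lbd^*(0) = \dfrac{\lbd\int_0^\omega e^{-\int_s^\omega m_\lbd(u)\du}\,\Lambda(s)\ds}{1-\rho_\lbd}$, and by periodicity of the coefficients the solution through this initial value is $\omega$-periodic; any other $\omega$-periodic solution would give another fixed point of $P_\lbd$, hence uniqueness. For global asymptotic stability in $\R^+$: if $x(t)$ is the solution with $x(0)=x_0 \ge 0$, then $x(n\omega) - x_\lbd^*(0) = \rho_\lbd^{\,n}\,(x_0 - x_\lbd^*(0)) \to 0$, and on each interval $[n\omega,(n+1)\omega]$ the difference $x(t)-x_\lbd^*(t)$ equals $e^{-\int_{n\omega}^t m_\lbd}(x(n\omega)-x_\lbd^*(n\omega))$, which is bounded in modulus by $|x(n\omega)-x_\lbd^*(0)|$; hence $x(t)\to x_\lbd^*(t)$ as $t\to\infty$. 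Finally, positivity is preserved: $x'=\lbd\Lambda \ge 0$ whenever $x=0$, so $\R^+$ (and its interior) is forward invariant, which is what makes the statement meaningful on $\R^+$.

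Since this is cited as Lemma~1 of~\cite{Niu-Zhang-Teng-AMM-2011}, the cleanest exposition is simply to invoke that reference; but for completeness the argument above is short and self-contained. The only point requiring minor care — the ``main obstacle'' such as it is — is the uniformity issue one might worry about later (the rate $\rho_\lbd$ degenerates to $1$ as $\lbd \to 0^+$), but for each \emph{fixed} $\lbd \in (0,1]$ the statement is immediate and no uniformity in $\lbd$ is claimed here; that will be handled separately when the bounds on periodic solutions of the full family~\eqref{eq:principal} are derived. I therefore expect no real difficulty: linearity plus $\bar\mu>0$ does all the work.
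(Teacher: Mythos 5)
Your proof is correct. Note, though, that the paper itself does not prove this lemma: it simply quotes it from~\cite{Niu-Zhang-Teng-AMM-2011}, so there is no internal proof to compare against. The argument you give (solve the linear scalar equation by an integrating factor, observe that the period map $x_0 \mapsto x(\omega)$ is affine with slope $\e^{-\lbd\omega\bar\mu}\in(0,1)$ because $\bar\mu>0$, extract the unique fixed point, and propagate the geometric contraction across periods using $\mu\ge 0$ pointwise to control the difference inside each period) is the standard one and is exactly what the cited reference does, so there is nothing substantively different here. Two small points worth keeping in mind: (i) the interpolation bound on $[n\omega,(n+1)\omega]$ uses $\mu(t)\ge 0$ for all $t$, not merely $\bar\mu>0$ — this is fine, since S\ref{cond-1}) gives pointwise nonnegativity; and (ii) your explicit formula for $x_\lbd^*(0)$, together with $\Lambda\ge 0$ and $\bar\Lambda>0$ (S\ref{cond-7})), also shows $x_\lbd^*(t)>0$, so the periodic solution genuinely lives in $\R^+$ — a fact the lemma's statement implicitly presupposes and which you may as well record. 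Your remark about the degeneration of the contraction rate as $\lbd\to 0^+$ is well placed: it is irrelevant for this lemma (fixed $\lbd$) but is precisely the kind of nonuniformity the later persistence lemmas must handle by other means.
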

\begin{lemma}[Lemma 2 in~\cite{Niu-Zhang-Teng-AMM-2011}]\label{lm1-1dim-2}
For each $\lbd \in (0,1]$ there is a unique $\omega$-periodic solution of equation~\eqref{eq:auxiliary-P(t)}, $z_\lbd^*(t)$, that is globally asymptotically stable in $\R^+$.
\end{lemma}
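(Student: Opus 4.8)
The plan is to reduce equation~\eqref{eq:auxiliary-P(t)} to the linear equation~\eqref{eq:auxiliary-S(t)} --- already settled in Lemma~\ref{lm1-1dim-1} --- by a Bernoulli-type change of variables, and then transport existence, uniqueness and global asymptotic stability back along that change of variables. First I would record two elementary facts about~\eqref{eq:auxiliary-P(t)}. Since $z\equiv 0$ is a solution, uniqueness for the initial value problem forces every solution with $z(t_0)>0$ to stay strictly positive on its whole interval of existence; and since $z'=\lbd(r(t)-b(t)z)z\le\lbd r^u z$ whenever $z\ge 0$, no positive solution can blow up in finite time, so each solution issuing from $\R^+$ is defined for all $t\ge 0$.

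On the invariant region $\{z>0\}$ I would then set $u=1/z$. A direct computation gives $u'=-z'/z^{2}=\lbd\bigl(b(t)-r(t)u\bigr)$, which is precisely an equation of the form~\eqref{eq:auxiliary-S(t)} with the pair $(\Lambda,\mu)$ replaced by $(b,r)$. By the standing assumptions, $b$ and $r$ are nonnegative, continuous and $\omega$-periodic with $\bar b>0$ and $\bar r>0$, so Lemma~\ref{lm1-1dim-1} applies verbatim and yields a unique $\omega$-periodic solution $u_\lbd^*(t)$ of $u'=\lbd(b(t)-r(t)u)$ that is globally asymptotically stable in $\R^+$. In particular $u_\lbd^*$ is strictly positive, and being $\omega$-periodic and continuous it satisfies $\inf_{t}u_\lbd^*(t)>0$.

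Finally I would push the conclusion back through the homeomorphism $z\mapsto 1/z$ of $(0,\infty)$ onto itself. Setting $z_\lbd^*:=1/u_\lbd^*$ gives a strictly positive $\omega$-periodic solution of~\eqref{eq:auxiliary-P(t)}, and it is the only positive $\omega$-periodic one, since the inverse transformation would turn a second such solution into a second $\omega$-periodic solution of the $u$-equation, contradicting Lemma~\ref{lm1-1dim-1}. For global asymptotic stability, take any solution $z$ of~\eqref{eq:auxiliary-P(t)} with $z(0)>0$; then $u=1/z$ solves the $u$-equation with $u(0)>0$, hence $u(t)-u_\lbd^*(t)\to 0$ as $t\to\infty$. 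Since $u(t)\ge\tfrac12\inf_{s}u_\lbd^*(s)>0$ for all sufficiently large $t$, the identity $z(t)-z_\lbd^*(t)=\bigl(u_\lbd^*(t)-u(t)\bigr)/\bigl(u(t)\,u_\lbd^*(t)\bigr)$ forces $z(t)-z_\lbd^*(t)\to 0$, and Lyapunov stability follows likewise from the local Lipschitz continuity of $z\mapsto 1/z$ on compact subsets of $(0,\infty)$.

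I do not expect a genuine obstacle here: the statement is just the theory of the scalar $\omega$-periodic logistic equation (and is Lemma 2 in~\cite{Niu-Zhang-Teng-AMM-2011}). The only points that need a little care are the global existence of positive solutions and the preservation of global asymptotic stability under the change of variables, and both become routine once one notes that $u_\lbd^*$ is bounded away from $0$.
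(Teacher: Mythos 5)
The paper does not supply a proof of this lemma: it is quoted verbatim from Lemma~2 of~\cite{Niu-Zhang-Teng-AMM-2011}, so there is no in-paper argument to compare against. That said, your self-contained derivation is correct and clean. The Bernoulli substitution $u=1/z$ on the forward-invariant half-line $\{z>0\}$ turns~\eqref{eq:auxiliary-P(t)} into $u'=\lambda\bigl(b(t)-r(t)u\bigr)$, which is exactly the form of~\eqref{eq:auxiliary-S(t)} with $(\Lambda,\mu)$ replaced by $(b,r)$, and condition~S\ref{cond-7}) gives the needed $\bar b>0$, $\bar r>0$ so that Lemma~\ref{lm1-1dim-1} applies. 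You correctly flag and handle the delicate points: no finite-time blow-up for positive solutions, strict positivity and hence a positive uniform lower bound for $u_\lambda^*$, and transport of both the attractivity and the Lyapunov-stability halves of GAS through the reciprocal map using that lower bound. One small addition worth making explicit: the logistic equation also has the trivial $\omega$-periodic solution $z\equiv 0$, so the uniqueness claim should be read (as you implicitly do) among solutions in $\R^+$, i.e.\ the unique positive, globally attracting periodic orbit. Your reduction is arguably tidier than a direct proof of the periodic-logistic lemma (explicit periodic solution plus a comparison/monotonicity argument), since it reuses the stability estimate already available from Lemma~\ref{lm1-1dim-1} rather than re-deriving it.
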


For each $\lambda \in (0,1]$, we also need to consider the next family of systems, which correspond to behavior of the preys and predators in the absence of infected preys (system~\eqref{eq:principal} with $I = 0$, $S = x$ and $P = z$):
\begin{equation}\label{eq:auxiliary-system-SP}
\begin{cases}
x'=\lbd(\Lambda(t)-\mu(t)x-a(t)f(x,z)z-\eps_1 x)\\
z'=\lbd(\gamma(t)a(t)f(x,z)+r(t)- b(t)z+\eps_2)z
\end{cases}.
\end{equation}
We now make our last structural assumption on system~\eqref{eq:principal}:
\begin{enumerate}[S$1$)]
\setcounter{enumi}{8}
\item\label{cond-9} For each $\lambda \in (0,1]$ and each $\eps_1,\eps_2\ge 0$ sufficiently small, system~\eqref{eq:auxiliary-system-SP} has a unique $\omega$-periodic solution, $(x^*_{\lbd,\eps_1,\eps_2}(t),z^*_{\lbd,\eps_1,\eps_2}(t))$,  with $x^*_{\lbd,\eps_1,\eps_2}(t)>0$ and $z^*_{\lbd,\eps_1,\eps_2}(t)>0$, that is globally asymptotically stable in the set $\{(x,z)\in (\R_0^+)^2: x\ge 0 \ \wedge \ z>0\}$. We assume that $(\eps_1,\eps_2) \mapsto (x^*_{\lbd,\eps_1,\eps_2}(t), \, z^*_{\lbd,\eps_1,\eps_2}(t))$ is continuous.
\end{enumerate}

Denoting $x^*_{\lbd}=x^*_{\lbd,0,0}$ and $z^*_{\lbd}=z^*_{\lbd,0,0}$, we introduce the numbers
\begin{equation}\label{def-R_0}
\overline{\mathcal R}_0=\frac{\bar \beta \bar \Lambda/\bar \mu}{\bar c+\bar \eta \bar r/\bar b}, \quad \mathcal R_0^\lbd=\frac{\overline{\beta x^*_{\lbd}}}{\overline{c}+\overline{\eta z^*_{\lbd}}} \quad \text{and} \quad \widetilde{\mathcal R}_0 =\inf_{\lbd \in (0,1]} \mathcal{R}_0^\lbd
\end{equation}

%-----------------------------------------------------------------------------%
\section{Main result}
%-----------------------------------------------------------------------------%
We now present our main result.
\begin{theorem}\label{the:main}
If $\widetilde{\cR}_0>1$, $\overline{\gamma a}\,\overline{\beta}-\overline{\theta\eta}\overline{a}\le 0$ and
\begin{equation}\label{cond:teo}
\overline{R}_0>1+\dfrac{\overline{\beta}}{\overline{\mu}}\left(\dfrac{\overline{a}\,\overline{r}}{\overline{\eta}\,\overline{r}+\overline{c}\overline{b}}-\dfrac{\overline{\gamma a}}{\overline{\theta \eta}}\right) f\left(\frac{\overline{\eta}\,\overline{r}/\overline{b}+\overline{c}}{\overline{\beta}},\overline{r}/\overline{b}\right)
\end{equation}
then system~\eqref{eq:principal} possesses an endemic periodic orbit of period $\omega$.
\end{theorem}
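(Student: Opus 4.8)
The plan is to apply Mawhin's continuation theorem, following the scheme sketched in the introduction and adapting the three-step approach of~\cite{Silva-JMAA-2017}. First I would perform a logarithmic change of variables, writing $S=\e^{u_1}$, $I=\e^{u_2}$, $P=\e^{u_3}$, which turns~\eqref{eq:principal} into an equivalent system on $\R^3$ whose $\omega$-periodic solutions correspond exactly to endemic periodic orbits of the original system. In this new coordinates one sets up the coincidence equation $Lu=N_\lambda u$ for $\lambda\in[0,1]$, where $L$ is the (Fredholm index zero) differentiation operator on the space of $\omega$-periodic $C^1$ functions and $N_\lambda$ encodes the right-hand side; for $\lambda\in(0,1)$ the equation $Lu=\lambda Nu$ is precisely the system obtained by multiplying the vector field by $\lambda$, which after undoing the change of variables is~\eqref{eq:principal} with all terms scaled by $\lambda$ — and the auxiliary equations~\eqref{eq:auxiliary-S(t)},~\eqref{eq:auxiliary-P(t)} and the auxiliary system~\eqref{eq:auxiliary-system-SP} are exactly the ingredients needed to control the resulting dynamics.

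The heart of the argument is the a priori bound step: I must show there is a bounded open set $\Omega\subset\R^3$ (independent of $\lambda$) containing all possible $\omega$-periodic solutions of $Lu=\lambda Nu$ for $\lambda\in(0,1)$. Integrating each of the three equations over a period kills the derivative terms and yields integral identities relating $\overline{\Lambda}$, $\overline{\mu \e^{u_1}}$, $\overline{a f(\e^{u_1},\e^{u_3})\e^{u_3}}$, $\overline{\beta \e^{u_1+u_2}}$, etc. From the $S$-equation one gets an upper bound on $\overline{\e^{u_1}}$ and hence, using that a periodic solution attains its average, an upper bound on $\min_t u_1$; comparison with the globally asymptotically stable periodic solution $x_\lambda^*$ of~\eqref{eq:auxiliary-S(t)} (Lemma~\ref{lm1-1dim-1}) and its $\eps$-perturbed version from~\eqref{eq:auxiliary-system-SP} gives a matching lower bound. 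The predator equation is handled similarly via Lemma~\ref{lm1-1dim-2}, hypothesis~\ref{cond-9}, and the sign condition $\overline{\gamma a}\,\overline{\beta}-\overline{\theta\eta}\,\overline{a}\le 0$, which is what lets one dominate the $f$-contribution and the $\theta\eta PI$ term when bounding $\e^{u_3}$ from above and below. For $I$: integrating the middle equation gives $\overline{\beta \e^{u_1}}=\overline{c}+\overline{\eta \e^{u_3}}$ at the ``average'' level only heuristically, so more carefully one uses $\widetilde{\cR}_0>1$ together with the bounds already obtained on $u_1,u_3$ to force $\overline{\e^{u_2}}$ into a compact positive interval — the lower bound on $I$ (endemicity) being where~\ref{cond-5}, the growth condition~\ref{cond-8} on $f(x,0)$, and inequality~\eqref{cond:teo} enter, since~\eqref{cond:teo} is precisely the quantitative threshold guaranteeing that the infection cannot be driven to zero along the homotopy.

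Once $\Omega$ is constructed, the remaining two conditions of Mawhin's theorem are comparatively routine. For $\lambda=0$ one must check $QN u\neq 0$ on $\partial\Omega\cap\ker L$, i.e. that the algebraic system given by the three period-averaged equations has no solution on the boundary of $\Omega$; this follows from the same inequalities used in the a priori bounds, taken now with non-strict-to-strict care at the endpoints (which is why $\Omega$ must be chosen slightly larger than the a priori box). Finally one computes the Brouwer degree $\deg(QN,\Omega\cap\ker L,0)$ and shows it is nonzero: here I would use a further homotopy within $\ker L$ to a simple reference map (e.g. a diagonal/triangular map in $u_1,u_3,u_2$ coming from the decoupled structure $\Lambda-\mu\e^{u_1}$, $r-b\e^{u_3}$, and the linear-in-$u_2$ part of the middle equation), whose Jacobian sign is easily read off, giving degree $\pm1$. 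Mawhin's theorem then yields a solution $u\in\overline\Omega$ of $Lu=Nu$, and exponentiating produces the desired endemic $\omega$-periodic orbit of~\eqref{eq:principal}. I expect the genuine obstacle to be the a priori estimate on the infected component — in particular extracting the sharp lower bound on $\overline{I}$ from~\eqref{cond:teo} while simultaneously juggling the $\lambda$-dependent perturbed periodic solutions of~\eqref{eq:auxiliary-system-SP} and the monotonicity hypotheses \ref{cond-3}--\ref{cond-6} on $f$; everything else is bookkeeping by comparison with the one-dimensional auxiliary equations.
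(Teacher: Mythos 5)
Your global strategy (logarithmic change of variables, a~priori bounds via comparison with the auxiliary one- and two-dimensional systems, then Mawhin) does match the paper's skeleton, but you misplace the three hypotheses of the theorem in a way that would derail the argument if carried out.

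In the paper the a~priori lower bound on the infected component is obtained \emph{only} from $\widetilde{\cR}_0>1$ together with S\ref{cond-9}): one first shows $\limsup_t I_\lbd\ge\eps_1$ by a contradiction argument that compares $(S_\lbd,P_\lbd)$ with the $\eps$-perturbed periodic solutions $x^*_{\lbd,\eps_1,\eps_2}$, $z^*_{\lbd,\eps_1,\eps_2}$, then bootstraps to a uniform $\liminf$. Neither condition~\eqref{cond:teo} nor S\ref{cond-5}) enters there; the growth hypothesis S\ref{cond-8}) is used for the lower bound on $S$, not $I$. Your proposal assigns~\eqref{cond:teo} to this step ``as the quantitative threshold guaranteeing the infection cannot be driven to zero along the homotopy'', which is a misreading: \eqref{cond:teo} is a static inequality on averages and gives no dynamical persistence. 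It is actually what makes the averaged algebraic system $QNu=0$ (equivalently, the system~\eqref{eq:principal-aux-3}) solvable: the paper reduces it to the scalar equation $G(\e^{u_3})=0$ on $[\bar r/\bar b,\infty)$, proves $G$ is strictly decreasing and tends to $-\infty$, and then \eqref{cond:teo} is precisely $G(\bar r/\bar b)>0$, giving a unique zero $p^*$. Since your degree step replaces the direct computation by a further homotopy to a decoupled map, your plan has no place where~\eqref{cond:teo} is actually used, and without it you cannot guarantee a zero of $QN$ in $U\cap\ker\mathcal L$ at all.

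Similarly, you assign $\overline{\gamma a}\,\overline{\beta}-\overline{\theta\eta}\,\overline{a}\le 0$ to the bounding of $P$. In the paper the bounds on $P_\lbd$ in Lemma~\ref{lemma:subsection-persist-2} need only the comparison with~\eqref{eq:auxiliary-P(t)} and the upper bound on $S+I$; the sign condition appears only in the computation of $\det J(p^*)$, where, combined with $\overline{\beta}\frac{\partial f}{\partial P}+\overline{\eta}\frac{\partial f}{\partial S}\ge 0$ (which is what S\ref{cond-5}) is for), it forces $\det J<0$ and hence $\deg(\mathcal I Q\mathcal N,U\cap\ker\mathcal L,0)\ne 0$. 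Your proposed homotopy to a triangular reference map would need to be checked to stay admissible (no zeros on $\partial(U\cap\ker\mathcal L)$ throughout the homotopy); the $S$- and $P$-equations are coupled through $f$ and through the $\theta\eta$ term, so this is not ``easily read off'' without precisely the sign information the paper extracts from these two hypotheses. In short: the skeleton is right, but the load-bearing inequalities are attached to the wrong steps, and the degree argument as proposed has a genuine gap where uniqueness of the averaged solution should be established via~\eqref{cond:teo}.
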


Our proof relies on an application of Mawhin's continuation theorem. We will proceed in several steps. Firstly, in subsection~\ref{subsection:Proof-subsection1}, we consider a one parameter family of systems and obtain uniform bounds for the components of any periodic solution of these systems. Next, in subsection~\ref{subsection:Proof-subsection2} we make a suitable change of variables in our family of systems to establish the setting where we will apply Mawhin's continuation Theorem. Finally, in subsection~\ref{subsection:Proof-subsection3}, we use Mawhin's continuation Theorem to obtain our result.
%%%%%%%%%%%%%%%%%%%%%%%%%%%%%%%%%%%%%%%%%%%%%%%%%%%%%%%%%%%%%%%%%%%%%%
\subsection{Uniform Persistence for the periodic orbits of a one parameter family of systems.}\label{subsection:Proof-subsection1}
%%%%%%%%%%%%%%%%%%%%%%%%%%%%%%%%%%%%%%%%%%%%%%%%%%%%%%%%%%%%%%%%%%%%%%
In this section, to obtain uniform bounds for the components of any periodic solution of the family of systems that we can obtain multiplying the right hand side of~\eqref{eq:principal} by $\lambda \in (0,1]$, we need to consider the auxiliary systems:
\begin{equation}\label{eq:principal-aux}
\begin{cases}
S_\lbd'=\lbd(\Lambda(t)-\mu(t)S_\lbd-a(t)f(S_\lbd,P_\lbd)P_\lbd-\beta(t)S_\lbd I_\lbd)\\
I_\lbd'=\lbd(\beta(t)S_\lbd I_\lbd-\eta(t)P_\lbd I_\lbd-c(t)I_\lbd)\\
P_\lbd'=\lbd(\gamma(t)a(t)f(S_\lbd,P_\lbd)P_\lbd+\theta(t)\eta(t)P_\lbd I_\lbd+ r(t)P_\lbd-b(t)P_\lbd^2)
\end{cases}.
\end{equation}
We will consider separately each of the several components of any periodic orbit.
\begin{lemma}\label{lemma:subsection-persist-1}
Let $x_\lbd^*(t)$ be the unique solution of~\eqref{eq:auxiliary-S(t)}. There is $L_1>0$ such that, for any $\lambda \in (0,1]$ and any periodic solution
$(S_\lbd(t),I_\lbd(t),P_\lbd(t))$ of~\eqref{eq:principal-aux} with initial conditions $S_\lbd(t_0)=S_0>0$, $I_\lbd(t_0)=I_0>0$ and $P_\lbd(t_0)=P_0>0$, we have $S_\lbd(t)+I_\lbd(t)\le x^*_\lbd(t) \le \Lambda^u/\mu^\ell$ and $S_\lbd \ge L_1$, for all $t \in \R$.
\end{lemma}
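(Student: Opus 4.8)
The plan is to handle the two assertions in turn, exploiting the comparison/monotonicity structure already built into the hypotheses. First I would derive the bound on $S_\lbd + I_\lbd$. Adding the first two equations of~\eqref{eq:principal-aux} yields
\[
(S_\lbd+I_\lbd)' = \lbd\bigl(\Lambda(t)-\mu(t)S_\lbd-a(t)f(S_\lbd,P_\lbd)P_\lbd-\eta(t)P_\lbd I_\lbd-c(t)I_\lbd\bigr).
\]
Since $f\ge 0$, $a,\eta,P_\lbd\ge 0$ and $c(t)\ge\mu(t)$, we get $(S_\lbd+I_\lbd)' \le \lbd(\Lambda(t)-\mu(t)(S_\lbd+I_\lbd))$. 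Writing $u_\lbd=S_\lbd+I_\lbd$, the function $u_\lbd$ is a subsolution of~\eqref{eq:auxiliary-S(t)}. By Lemma~\ref{lm1-1dim-1}, equation~\eqref{eq:auxiliary-S(t)} has a globally asymptotically stable $\omega$-periodic solution $x_\lbd^*(t)$; a standard comparison argument for periodic scalar equations then forces any $\omega$-periodic subsolution to lie below $x_\lbd^*$, so $u_\lbd(t)\le x_\lbd^*(t)$ for all $t$. (More carefully: if $u_\lbd(t_1)>x_\lbd^*(t_1)$ for some $t_1$, comparison gives $u_\lbd(t)-x_\lbd^*(t)$ with a definite sign behaviour pushing the two solutions apart or together in a way incompatible with both being $\omega$-periodic; alternatively one uses that the time-$\omega$ map of the linear-in-$u$ inequality is a contraction towards $x_\lbd^*$.) The pointwise estimate $x_\lbd^*(t)\le\Lambda^u/\mu^\ell$ is itself immediate from the same comparison: the constant $\Lambda^u/\mu^\ell$ is a supersolution of~\eqref{eq:auxiliary-S(t)} since $\lbd(\Lambda(t)-\mu(t)\Lambda^u/\mu^\ell)\le\lbd(\Lambda^u-\mu^\ell\Lambda^u/\mu^\ell)=0$.

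Next I would produce the uniform lower bound $S_\lbd\ge L_1$. From the first equation and the bounds just obtained, together with hypothesis S\ref{cond-4} ($z\mapsto f(x,z)$ nonincreasing, but here I need an upper bound on $f$) — more precisely, using $S_\lbd\le x_\lbd^*(t)\le\Lambda^u/\mu^\ell$, monotonicity S\ref{cond-3} in the first variable gives $f(S_\lbd,P_\lbd)\le f(\Lambda^u/\mu^\ell,P_\lbd)$, and I will need some control of $P_\lbd$. At this point I expect to need an a priori upper bound $P_\lbd\le L_P$ for periodic solutions: this should follow from the third equation of~\eqref{eq:principal-aux} by a comparison with a logistic-type equation, after bounding $f(S_\lbd,P_\lbd)\le f(\Lambda^u/\mu^\ell,0)$ (using both monotonicities) which is a constant by S\ref{cond-2}, and bounding $I_\lbd$ by $x_\lbd^*(t)$ from the first step; then $P_\lbd'\le\lbd(\text{const}-b(t)P_\lbd)P_\lbd$ and a periodic-logistic comparison yields a uniform $L_P$. (This upper bound on $P$ is presumably also recorded as a separate lemma in the paper; I will assume it or prove it inline.) Granting $S_\lbd\le\Lambda^u/\mu^\ell$, $P_\lbd\le L_P$ and $I_\lbd\le\Lambda^u/\mu^\ell$, the first equation gives
\[
S_\lbd' \ge \lbd\bigl(\Lambda^\ell-\mu^u S_\lbd-a^u f(\Lambda^u/\mu^\ell,0)L_P-\beta^u (\Lambda^u/\mu^\ell) S_\lbd\bigr)
      = \lbd\bigl(A - B\, S_\lbd\bigr)
\]
for constants $A,B$ independent of $\lbd$, where I must check $A>0$ — this is exactly the kind of place where hypothesis S\ref{cond-7} ($\bar\Lambda>0$) and the smallness built into the constants play a role; if $A$ is not automatically positive one argues instead that $S_\lbd$ cannot stay small, since where $S_\lbd$ is small the predation and incidence losses are small too. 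Comparing the periodic solution $S_\lbd$ with the stable periodic solution of $y'=\lbd(A-By)$, whose value is bounded below by $A/B>0$ uniformly in $\lbd$, gives $S_\lbd(t)\ge L_1:=A/B$ for all $t$ and all $\lbd\in(0,1]$.

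The main obstacle I anticipate is the lower bound on $S_\lbd$: the crude linear comparison above only works if the "worst case" constant term $A$ is genuinely positive, and this requires knowing that $f(\Lambda^u/\mu^\ell,0)$ and $L_P$ and $\Lambda^u/\mu^\ell$ combine favourably with $\Lambda^\ell$. If that fails one cannot use a single global linear subsolution and must instead run a more delicate argument: integrate $S_\lbd'/S_\lbd = \lbd(\Lambda(t)/S_\lbd-\mu(t)-a(t)f(S_\lbd,P_\lbd)P_\lbd/S_\lbd-\beta(t)I_\lbd)$ over a period, use $\omega$-periodicity to kill the left side, and extract that on average $\Lambda(t)/S_\lbd$ is controlled, which together with an oscillation estimate (bounding $|\ln S_\lbd(t)-\ln S_\lbd(s)|$ via $\int_0^\omega|S_\lbd'/S_\lbd|$) yields a uniform positive lower bound — this is the standard technique in Mawhin-continuation arguments and is presumably how the paper proceeds. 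I would also double-check that all the constants $L_1,L_P$ produced this way are genuinely independent of $\lbd\in(0,1]$, which they are because the factor $\lbd$ only rescales time and does not affect the location of periodic solutions of the scalar comparison equations (by Lemmas~\ref{lm1-1dim-1} and~\ref{lm1-1dim-2} the periodic solutions $x_\lbd^*,z_\lbd^*$ exist for every such $\lbd$, and one checks their sup/inf are uniformly bounded).
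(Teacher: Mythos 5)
Your handling of the two upper bounds ($S_\lbd+I_\lbd\le x^*_\lbd\le\Lambda^u/\mu^\ell$ via comparison with~\eqref{eq:auxiliary-S(t)}, and the auxiliary bound $P_\lbd\le L_P$ via a periodic-logistic comparison using S\ref{cond-3}), S\ref{cond-4}) to freeze $f$ at $(x^*_\lbd,0)$) matches the paper's proof. The genuine gap is in the lower bound $S_\lbd\ge L_1$, and it is exactly where you yourself flag a doubt. You estimate the predation term $a(t)f(S_\lbd,P_\lbd)P_\lbd$ from above by a \emph{constant} $a^u f(\Lambda^u/\mu^\ell,0)L_P$, which subtracts from $\Lambda^\ell$ and can push your $A$ below zero; you then hope that a period-averaged/oscillation argument will rescue the bound. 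That fallback does not close the gap either: integrating $S_\lbd'/S_\lbd$ over a period again produces the term $\int_0^\omega a(t)f(S_\lbd,P_\lbd)P_\lbd/S_\lbd\,dt$, and without extra structure there is nothing preventing $f(S_\lbd,P_\lbd)/S_\lbd$ from blowing up as $S_\lbd\to 0$, so the average of $\Lambda(t)/S_\lbd$ is not actually controlled.

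The paper avoids this by invoking hypothesis S\ref{cond-8}), which you never use. Writing the predation term as $a(t)\,\dfrac{f(S_\lbd,P_\lbd)}{S_\lbd}\,P_\lbd\cdot S_\lbd$, one uses S\ref{cond-4}) to get $f(S_\lbd,P_\lbd)\le f(S_\lbd,0)$, then S\ref{cond-8}) to get $f(S_\lbd,0)/S_\lbd\le K S_\lbd^{\alpha-1}\le K\bigl((x^*_\lbd)^u\bigr)^{\alpha-1}$ (here $\alpha\ge1$ and the already-established upper bound on $S_\lbd$ are both essential), and then $P_\lbd\le\Theta^u/b^\ell$. The whole predation term is thus bounded by a constant \emph{times} $S_\lbd$, so it is absorbed into the linear coefficient of the comparison inequality $S_\lbd'\ge\lbd\Lambda^\ell-\lbd B S_\lbd$, and the resulting lower bound $L_1=\Lambda^\ell/B$ is automatically positive (and the $\lbd$ factors cancel, giving uniformity in $\lbd$). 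Without this use of S\ref{cond-8}) your proof is incomplete.
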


\begin{proof}
Let $(S_\lbd(t),I_\lbd(t),P_\lbd(t))$ be some periodic solution of~\eqref{eq:principal-aux} with initial conditions $S_\lbd(t_0)=S_0>0$, $I_\lbd(t_0)=I_0>0$ and $P_\lbd(t_0)=P_0>0$. Since $c(t) \ge \mu(t)$, we
have, by the first and second equations of~\eqref{eq:principal-aux},
$$(S_\lbd + I_\lbd)' \le \lbd\Lambda(t)-\lbd\mu(t) S_\lbd -\lbd c(t)I_\lbd \le \lbd\Lambda(t)-\lbd\mu(t)(S_\lbd + I_\lbd).$$
Since, by Lemma~\ref{lm1-1dim-1}, equation~\eqref{eq:auxiliary-S(t)} has a unique periodic orbit, $x^*_\lbd(t)$, that is globally
asymptotically stable, we conclude that $S_\lbd(t) + I_\lbd(t)\le x^*_\lbd(t)$ for all $t \in \R$. Comparing equation~\eqref{eq:auxiliary-S(t)} with equation $x'=\lbd\Lambda^u-\lbd\mu^\ell x$, we conclude that $x^*_\lbd(t)\le \Lambda^u/\mu^\ell$.

Using conditions~S\ref{cond-3}) and S\ref{cond-4}), by the third equation of~\eqref{eq:principal-aux}, we have
$$P_\lbd'\le\lbd(r(t)+\gamma(t)a(t)f(x_\lbd^*(t),0)+\theta(t)\eta(t)x_\lbd^*(t)-b(t)P_\lbd)P_\lbd\le
(\Theta^u- b^\ell P_\lbd)P_\lbd,$$
where function $\Theta$ is given by $\Theta(t)=r(t)+\gamma(t)a(t)f(x_\lbd^*(t),0)+\theta(t)\eta(t)x_\lbd^*(t)$.
 Thus, comparing with equation~\eqref{eq:auxiliary-P(t)} and using Lemma~\ref{lm1-1dim-2}, we get $P_\lbd(t)\le P_\lbd^*(t) \le \Theta^u/b^\ell$. Using
the bound obtained above, since $-\beta(t)S_\lbd(t)\ge-\beta(t)x^*_\lbd(t)$, we have, by conditions~S\ref{cond-3}), S\ref{cond-4}) and S\ref{cond-8}),
\[
\begin{split}
S_\lbd'
& =\lbd\Lambda(t)-\lbd\mu(t)S_\lbd-\lbd a(t)f(S_\lbd,P_\lbd)P_\lbd-\lbd \beta(t)S_\lbd I_\lbd\\
& \ge \lbd\Lambda^\ell-\left( \lbd\mu^u+\lbd a^u \frac{f(S_\lbd,0)}{S_\lbd}\frac{\Theta^u}{b^\ell} +\lbd \beta^u (x^*_\lbd)^u \right)S_\lbd\\
& \ge \lbd\Lambda^\ell-\left( \lbd\mu^u +\lbd a^u K((x^*_\lbd)^u)^{\alpha-1}\Theta^u/b^\ell +\lbd \beta^u (x^*_\lbd)^u \right)S_\lbd
\end{split}
\]
and thus
\[
S_\lbd(t) \ge \dfrac{ \lbd\Lambda^\ell}{\lbd\mu^u +\lbd a^u K((x^*_\lbd)^u)^{\alpha-1} \Theta^u/b^\ell +\lbd \beta^u (x^*_\lbd)^u}=:L_1.
\]
\end{proof}

\begin{lemma}\label{lemma:subsection-persist-2}
Let $z_\lbd^*(t)$ be the unique solution of~\eqref{eq:auxiliary-P(t)}. There is $L_2>0$ such that, for any $\lambda \in (0,1]$ and any periodic solution
$(S_\lbd(t),I_\lbd(t),P_\lbd(t))$ of~\eqref{eq:principal-aux} with initial conditions $S_\lbd(t_0)=S_0>0$, $I_\lbd(t_0)=I_0>0$ and $P_\lbd(t_0)=P_0>0$, we have $r^\ell/b^u\le z^*_\lbd(t)\le P_\lbd(t) \le L_2$, for all $t \in \R$.
\end{lemma}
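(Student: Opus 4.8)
The plan is to prove Lemma~\ref{lemma:subsection-persist-2} by a comparison argument analogous to the one used in Lemma~\ref{lemma:subsection-persist-1}, but applied from below to the $P_\lbd$ component, together with a uniform upper bound obtained from the already established bounds on $S_\lbd$ and $I_\lbd$. The upper bound $P_\lbd(t) \le L_2$ should follow directly: from Lemma~\ref{lemma:subsection-persist-1} we have $S_\lbd(t)+I_\lbd(t) \le x^*_\lbd(t) \le \Lambda^u/\mu^\ell$, so in the third equation of~\eqref{eq:principal-aux} we can bound $f(S_\lbd,P_\lbd) \le f(S_\lbd,0) \le f(\Lambda^u/\mu^\ell,0)$ using S\ref{cond-3}) and S\ref{cond-4}), and $I_\lbd \le \Lambda^u/\mu^\ell$, giving $P_\lbd' \le \lbd(\widetilde\Theta^u - b^\ell P_\lbd)P_\lbd$ for an appropriate periodic function $\widetilde\Theta$ with $\lbd$-independent supremum; comparison with~\eqref{eq:auxiliary-P(t)} and Lemma~\ref{lm1-1dim-2} then yields $P_\lbd(t) \le z^{**}(t) \le \widetilde\Theta^u/b^\ell =: L_2$, uniformly in $\lbd$. (This partly duplicates the $\Theta^u/b^\ell$ bound already appearing in the proof of Lemma~\ref{lemma:subsection-persist-1}, which I would reuse.)

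For the lower bound, the key observation is that the third equation of~\eqref{eq:principal-aux} can be estimated from below by discarding the nonnegative terms $\gamma(t)a(t)f(S_\lbd,P_\lbd)P_\lbd$ and $\theta(t)\eta(t)P_\lbd I_\lbd$ (all nonnegative by S\ref{cond-1}), S\ref{cond-2}) and positivity of the solution), which gives
\[
P_\lbd' \ge \lbd(r(t)P_\lbd - b(t)P_\lbd^2) = \lbd(r(t)-b(t)P_\lbd)P_\lbd.
\]
Thus $P_\lbd$ is a supersolution of equation~\eqref{eq:auxiliary-P(t)}. Since by Lemma~\ref{lm1-1dim-2} equation~\eqref{eq:auxiliary-P(t)} has a globally asymptotically stable $\omega$-periodic solution $z^*_\lbd(t)$, a comparison principle for this scalar equation gives $P_\lbd(t) \ge z^*_\lbd(t)$ for all $t \in \R$: indeed, a periodic supersolution that ever dipped below $z^*_\lbd$ would, by global asymptotic stability, be forced to converge to $z^*_\lbd$ and hence could not remain periodic and strictly below; more directly, the difference $P_\lbd - z^*_\lbd$ satisfies a linear-type differential inequality whose sign is preserved, and periodicity forces $P_\lbd \ge z^*_\lbd$ throughout. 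Finally, comparing~\eqref{eq:auxiliary-P(t)} with $z' = \lbd(r^\ell - b^u z)z$ and invoking Lemma~\ref{lm1-1dim-2} once more yields $z^*_\lbd(t) \ge r^\ell/b^u$, completing the chain $r^\ell/b^u \le z^*_\lbd(t) \le P_\lbd(t) \le L_2$.

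The main obstacle I anticipate is making the comparison argument for the lower bound fully rigorous: the inequality $P_\lbd' \ge \lbd(r(t)-b(t)P_\lbd)P_\lbd$ only gives that $P_\lbd$ dominates the solution of~\eqref{eq:auxiliary-P(t)} with the same initial condition, and one must then use the global asymptotic stability of $z^*_\lbd$ plus the periodicity of $P_\lbd$ to upgrade this to $P_\lbd(t) \ge z^*_\lbd(t)$ for all $t$. The cleanest way is to argue by contradiction: if $P_\lbd(\tau) < z^*_\lbd(\tau)$ for some $\tau$, then running both~\eqref{eq:auxiliary-P(t)} and the supersolution inequality forward and using monotone dependence, the solution of~\eqref{eq:auxiliary-P(t)} starting at $P_\lbd(\tau)$ stays below $P_\lbd$, yet converges to $z^*_\lbd$, which would eventually contradict $P_\lbd(\tau+n\omega)=P_\lbd(\tau)<z^*_\lbd(\tau)$ once $n$ is large. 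A second minor point is checking that the upper bound $L_2$ is genuinely independent of $\lbd$, which holds because $x^*_\lbd(t) \le \Lambda^u/\mu^\ell$ uniformly (Lemma~\ref{lemma:subsection-persist-1}) and $f$ is monotone, so $\widetilde\Theta^u$ does not depend on $\lbd$. Everything else is routine comparison of scalar logistic and linear equations.
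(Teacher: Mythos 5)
Your proposal is correct and follows essentially the same route as the paper: discard the nonnegative terms in the third equation to obtain $P_\lbd' \ge \lbd(r(t)-b(t)P_\lbd)P_\lbd$, compare with~\eqref{eq:auxiliary-P(t)} and Lemma~\ref{lm1-1dim-2} to get $P_\lbd \ge z^*_\lbd \ge r^\ell/b^u$, and use the bounds from Lemma~\ref{lemma:subsection-persist-1} together with S\ref{cond-3}), S\ref{cond-4}) to obtain a $\lbd$-independent logistic upper bound for $P_\lbd$. Your explicit justification of the ``periodic supersolution dominates the periodic orbit'' step via global asymptotic stability plus periodicity is a welcome elaboration of what the paper leaves as a one-line comparison remark, but it is not a different approach.
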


\begin{proof}
Let $\lbd \in (0,1]$ and $(S_\lbd(t),I_\lbd(t),P_\lbd(t))$ be any periodic solution of~\eqref{eq:principal-aux} with initial conditions $S_\lbd(t_0)=S_0>0$, $I_\lbd(t_0)=I_0>0$ and $P_\lbd(t_0)=P_0>0$. We have
$$P_\lbd'=\lbd P_\lbd(\gamma(t)a(t)f(S_\lbd,P_\lbd)+\theta(t)\eta(t) I_\lbd+ r(t)-b(t)P_\lbd)\ge (\lbd r(t)-\lbd b(t)P_\lbd)P_\lbd.$$
Comparing the previous inequality with equation~\eqref{eq:auxiliary-P(t)} and using Lemma~\ref{lm1-1dim-2}, we get $P_\lbd(t)\ge z^*_\lbd(t)$. Moreover, comparing equation~\eqref{eq:auxiliary-P(t)} with equation $z'=\lbd(r^\ell-b^uz)z$ we conclude that $z^*_\lbd(t) \ge r^\ell/b^u$.

Using the computations in proof of the previous lemma, we have $P_\lbd(t)\le L_1$ and we take $L_2=L_1$.
%On the other hand, using Lemma~\ref{lm1-1dim-2}, we obtain
%\[
%\begin{split}
%P_\lbd'
%& = \lbd(\gamma(t)a(t)f(S_\lbd,P_\lbd)P_\lbd+\theta(t)\eta(t)P_\lbd I_\lbd+ r(t)P_\lbd-b(t)P_\lbd^2)\\
%& \le P_\lbd(\lbd(r^u+\theta^u\eta^u\Lambda^u/\mu^\ell+\gamma^u M_f)-\lbd b^\ell P_\lbd),
%\end{split}
%\]
%where $M_f=\sup_{t \in [0,\omega]} a(t)f(\Lambda^u/\mu^\ell,0)$. Thus, we have
%$$P_\lbd(t)\le \lbd(r^u+\theta^u\eta^u\Lambda^u/\mu^\ell+\gamma^u M_f)/(\lbd b^\ell)=(r^u+\theta^u\eta^u\Lambda^u/\mu^\ell+\gamma^u M_f)/ b^\ell.$$
%The result follows letting $L_2=(r^u+\theta^u\eta^u\Lambda^u/\mu^\ell+\gamma^u M_f)/ b^\ell$.
\end{proof}

\begin{lemma}\label{lemma:subsection-persist-3}
Let $\widetilde{\mathcal R}_0>1$. There are $L_3,L_4>0$ such that, for any $\lambda \in (0,1]$ and any periodic solution
$(S_\lbd(t),I_\lbd(t),P_\lbd(t))$ of~\eqref{eq:principal-aux} with initial conditions $S_\lbd(t_0)=S_0>0$, $I_\lbd(t_0)=I_0>0$ and $P_\lbd(t_0)=P_0>0$, we have $L_3 \le I_\lbd(t) \le L_4$, for all $t \in \R$.
\end{lemma}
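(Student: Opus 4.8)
The plan is to obtain the upper bound $L_4$ first, then the lower bound $L_3$, using the previously established bounds on $S_\lbd$ and $P_\lbd$ from Lemmas~\ref{lemma:subsection-persist-1} and~\ref{lemma:subsection-persist-2}. For the upper bound, I would start from the second equation of~\eqref{eq:principal-aux}, $I_\lbd' = \lbd(\beta(t)S_\lbd - \eta(t)P_\lbd - c(t))I_\lbd$, and bound the coefficient using $S_\lbd \le x_\lbd^*(t) \le \Lambda^u/\mu^\ell$ (from Lemma~\ref{lemma:subsection-persist-1}) and $P_\lbd \ge 0$, giving $I_\lbd' \le \lbd(\beta^u \Lambda^u/\mu^\ell)I_\lbd$. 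Since $I_\lbd$ is $\omega$-periodic, integrating $\,(\ln I_\lbd)'\,$ over a period forces the average of $\beta(t)S_\lbd - \eta(t)P_\lbd - c(t)$ to be zero; this, combined with a pointwise crude bound and the periodicity, yields a uniform upper bound $L_4$ independent of $\lbd$. More carefully: from $S_\lbd + I_\lbd \le x_\lbd^*(t) \le \Lambda^u/\mu^\ell$ we already get $I_\lbd(t) \le \Lambda^u/\mu^\ell =: L_4$ directly, so the upper bound is essentially immediate from Lemma~\ref{lemma:subsection-persist-1}.

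For the lower bound $L_3$, which I expect to be the main obstacle, the idea is to exploit the hypothesis $\widetilde{\mathcal R}_0 > 1$. Integrating the logarithmic derivative of $I_\lbd$ over one period gives $0 = \int_0^\omega (\beta(t)S_\lbd(t) - \eta(t)P_\lbd(t) - c(t))\dt$, i.e. $\overline{\beta S_\lbd} = \bar c + \overline{\eta P_\lbd}$. The strategy is to argue by contradiction: if $I_\lbd$ were very small on all of $\R$, then $S_\lbd$ and $P_\lbd$ would be close to the solution of the uninfected subsystem~\eqref{eq:auxiliary-system-SP} (with $\eps_1, \eps_2$ small), by the global asymptotic stability in condition~S\ref{cond-9}) together with the continuity in $(\eps_1,\eps_2)$; hence $\overline{\beta S_\lbd} \approx \overline{\beta x_\lbd^*}$ and $\overline{\eta P_\lbd} \approx \overline{\eta z_\lbd^*}$, so the period-average identity would force $\mathcal R_0^\lbd \approx 1$, contradicting $\mathcal R_0^\lbd \ge \widetilde{\mathcal R}_0 > 1$ uniformly in $\lbd$. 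Making this quantitative and uniform in $\lbd \in (0,1]$ is the delicate point: one needs that the "closeness" of $(S_\lbd, P_\lbd)$ to $(x_{\lbd,\eps_1,\eps_2}^*, z_{\lbd,\eps_1,\eps_2}^*)$ can be made uniform, using that the relevant quantities ($S_\lbd$ bounded below by $L_1$, above by $\Lambda^u/\mu^\ell$; $P_\lbd$ trapped between $r^\ell/b^u$ and $L_2$) all lie in a fixed compact region independent of $\lbd$, and that the terms $-\beta(t) S_\lbd I_\lbd$ in the first equation and $\theta(t)\eta(t)P_\lbd I_\lbd$ in the third equation are controlled by $\eps_1 S_\lbd$ and $\eps_2 P_\lbd$ respectively once $I_\lbd \le \eps_0$ for suitable $\eps_0$ (taking $\eps_1 = \beta^u \eps_0$, $\eps_2 = \theta^u \eta^u \eps_0 / (r^\ell/b^u)$ or similar), so that $(S_\lbd, P_\lbd)$ is squeezed between solutions of~\eqref{eq:auxiliary-system-SP} with parameters $(0,0)$ and $(\eps_1,\eps_2)$.

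Concretely, I would fix $\delta > 0$ with $\widetilde{\mathcal R}_0 > 1 + \delta$, choose $\eps_1, \eps_2$ small enough (via the continuity assumed in S\ref{cond-9})) that $\overline{\beta x_{\lbd,\eps_1,\eps_2}^*} / (\bar c + \overline{\eta z_{\lbd,\eps_1,\eps_2}^*}) > 1 + \delta/2$ for all $\lbd$, then pick $\eps_0 > 0$ small enough (in terms of $\eps_1, \eps_2$ and the fixed parameter sup/infs) that $I_\lbd(t) \le \eps_0$ on $\R$ would force, by comparison and the global attractivity in S\ref{cond-9}) applied to the $\omega$-periodic (hence recurrent) solution, the bounds $x_\lbd^*(t) \ge S_\lbd(t)$ and $z_\lbd^*(t) \le P_\lbd(t) \le z_{\lbd,\eps_1,\eps_2}^*(t)$ up to an error small enough to contradict the averaged identity $\overline{\beta S_\lbd} = \bar c + \overline{\eta P_\lbd}$. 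This yields $L_3 := \eps_0$ as the desired uniform lower bound, completing the proof. The one technical subtlety to handle with care is that S\ref{cond-9}) gives asymptotic (not uniform-in-initial-data) convergence, so one must invoke that a periodic solution of a system that is a small perturbation of~\eqref{eq:auxiliary-system-SP} must itself be uniformly close to the periodic attractor — this follows from the attractor being globally asymptotically stable and the perturbation being uniformly small on the relevant compact set, a standard but nonvacuous continuous-dependence argument.
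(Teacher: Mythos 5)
Your approach mirrors the paper's in its core idea (squeeze the $(S_\lbd,P_\lbd)$-subsystem between periodic solutions of perturbed uninfected subsystems via condition~S\ref{cond-9}), integrate the logarithmic derivative of $I_\lbd$ over a period, and contradict $\widetilde{\mathcal R}_0>1$), and your upper bound $L_4=\Lambda^u/\mu^\ell$ is exactly right. However, there is a genuine gap in the passage to the lower bound $L_3$.

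The contradiction argument you describe runs under the hypothesis ``$I_\lbd(t)\le\eps_0$ for \emph{all} $t\in\R$,'' and its conclusion is therefore only that no periodic solution can stay \emph{uniformly} below $\eps_0$, i.e.\ it gives a lower bound $\sup_t I_\lbd(t)\ge\eps_0$ uniformly in $\lbd$. That is not the same as $\inf_t I_\lbd(t)\ge L_3$: the negation of the desired conclusion is merely that for every candidate $L_3$ there is some $\lbd$ and some instant $t$ with $I_\lbd(t)<L_3$, which does not put you in the setting where the squeezing between $(x^*_{\lbd,\eps_1,\eps_2},z^*_{\lbd,\eps_1,\eps_2})$-type solutions applies. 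Declaring $L_3:=\eps_0$ at the end therefore does not follow from what precedes it. The paper bridges this gap with a second step that you omit: from $S_\lbd\le\Lambda^u/\mu^\ell$, $P_\lbd\ge 0$ and $c\ge 0$ one gets the growth bound $I_\lbd'\le(\beta^u\Lambda^u/\mu^\ell)\,I_\lbd$, hence $\ln I_\lbd$ is Lipschitz with a constant independent of $\lbd$; combined with $\omega$-periodicity (so the maximum over a period is attained within time $\omega$ of any instant) and the already-established uniform lower bound on $\max_t I_\lbd(t)$, this yields a uniform lower bound on $\min_t I_\lbd(t)$. Concretely, the paper argues by contradiction with a sequence $\lbd_n$ and times $s_n<t_n$, $t_n-s_n\le\omega$, $I_{\lbd_n}(s_n)=1/n$, $I_{\lbd_n}(t_n)=\eps_2/2$, and derives $\ln(\eps_2 n/2)\le\beta^u\Lambda^u\omega/\mu^\ell$, which is impossible for large $n$. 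Without this step (or an equivalent control on the variation of $\ln I_\lbd$ within one period) the proof is incomplete. A minor further point: the perturbation sizes you propose, $\eps_1=\beta^u\eps_0$ and $\eps_2=\theta^u\eta^u\eps_0/(r^\ell/b^u)$, are not quite what comparison with~\eqref{eq:auxiliary-system-SP} requires — the third-equation perturbation $\theta(t)\eta(t)P_\lbd I_\lbd$ is already of the form (coefficient)$\cdot z$ with coefficient at most $\theta^u\eta^u\eps_0$, so no division by $r^\ell/b^u$ is needed — but this is cosmetic and does not affect the structure of the argument.
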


\begin{proof}
We will first prove that there is $\eps_1>0$ such that, for any $\lbd\in (0,1]$, we have
\begin{equation}\label{eq:contradicao-persist-I}
\limsup_{t \to +\infty} I_\lbd(t)\ge \eps_1.
\end{equation}
By contradiction, assume that~\eqref{eq:contradicao-persist-I} does not hold. Then, for any $\eps>0$, there must
be $\lbd>0$ such that $I_\lbd(t)<\eps$ for all $t \in \R$. We have
\[
\begin{cases}
S_\lbd'\le \lbd\Lambda(t)-\lbd\mu(t)S_\lbd-\lbd a(t)f(S_\lbd,P_\lbd)P_\lbd\\
P_\lbd'\le \lbd(\gamma(t)a(t)f(S_\lbd,P_\lbd)+ r(t)-b(t)P_\lbd+\lbd\eps\theta^u\eta^u)P_\lbd
\end{cases}
\]
and
\[
\begin{cases}
S_\lbd'\ge \lbd\Lambda(t)-\lbd\mu(t)S_\lbd-\lbd a(t)f(S_\lbd,P_\lbd)P_\lbd-\eps\lbd\beta^uS_\lbd\\
P_\lbd'\ge \lbd(\gamma(t)a(t)f(S_\lbd,P_\lbd)+ r(t)-b(t)P_\lbd)P_\lbd
\end{cases}.
\]
By condition~S\ref{cond-9}), we conclude that
$$x^*_{\lbd,\eps\lbd\beta^u,0}(t)\le S_\lbd(t)\le
x^*_{\lbd,0,\eps\lbd\theta^u\eta^u}(t)$$
and
$$z^*_{\lbd,\eps\lbd\beta^u,0}(t)\le P_\lbd(t)\le
z^*_{\lbd,0,\eps\lbd\theta^u\eta^u}(t).$$
Thus, using condition~S\ref{cond-9}), we have
\begin{equation}
\begin{split}
I_\lbd'
& =\lbd(\beta(t)S_\lbd -\eta(t)P_\lbd-c(t))I_\lbd\\
& \ge(\lbd\beta(t)x^*_{\lbd,\eps\lbd\beta^u,0}(t) -\lbd\eta(t)z^*_{\lbd,0,\eps\lbd\theta^u\eta^u}(t)
-\lbd c(t))I_\lbd\\
& \ge(\lbd\beta(t)x^*_{\lbd}(t) -\lbd\eta(t)z^*_{\lbd}(t)-\lbd c(t)-\phi(\eps))I_\lbd,
\end{split}
\end{equation}
where $\phi$ is a nonegative function such that $\phi(\eps)\to 0$ as $\eps \to 0$ (notice that, by continuity, we can assume that $\phi$ is
independent of $\lbd$ and, by periodicity of the parameter functions, it is independent of $t$).

Integrating in $[0,\omega]$ and using~\eqref{cond-9}, we get
\[
\begin{split}
0 & =\frac{1}{\omega}\left(\ln I_\lbd(\omega)-\ln I_\lbd(0)\right)
=\frac{1}{\omega}\int_0^\omega I_\lbd'(s)/I_\lbd(s)\ds \\
& \ge
\lbd\left(\overline{\beta x^*_{\lbd}}-\bar c-\overline{\eta z^*_{\lbd}}\right)+\phi(\eps)
= \lbd(\bar c+\overline{\eta z^*_{\lbd}})(\cR_0^\lbd-1)+\phi(\eps)
\end{split}
\]
and since
$$\cR_0^\lbd\ge \inf_{\ell \in (0,1]} \cR_0^\ell=\widetilde{\cR}_0>1,$$
we have a contradiction. We conclude that~\eqref{eq:contradicao-persist-I} holds.
Next we will prove that there is $\eps_2>0$ such that, for any $\lbd\in (0,1]$, we have
\begin{equation}\label{eq:contradicao-persist-I-2}
\liminf_{t \to +\infty} I_\lbd(t)\ge \eps_2.
\end{equation}
Assuming by contradiction that~\eqref{eq:contradicao-persist-I-2} does not hold, we conclude that there is a sequence $(\lbd_n,I_{\lbd_n}(s_n),I_{\lbd_n}(t_n))\subset (0,1]\times\R_0^+\times\R_0^+$ such that $s_n<t_n$, $t_n-s_n\le \omega$,
$$I_{\lbd_n}(s_n)=1/n, \quad I_{\lbd_n}(t_n)=\eps_2/2 \quad \text{and} \quad I_{\lbd_n}(t)\in (1/n,\eps_2/2), \ \text{for all}  \ t \in (s_n,t_n).$$
Since $\lbd_n\le 1$, by Lemma~\ref{lemma:subsection-persist-1} we have
$$I'_{\lbd_n}=({\lbd_n}\beta(t)S_{\lbd_n} -{\lbd_n}\eta(t)P_{\lbd_n}-{\lbd_n}c(t))I_{\lbd_n}\le \beta^u \Lambda^u I_{\lbd_n}/\mu^\ell$$
and thus
$$\ln(\eps_2 n/2)=\ln(I_{\lbd_n}(t_n)/I_{\lbd_n}(s_n))
=\int_{s_n}^{t_n} I'_{\lbd_n}(s)/I_{\lbd_n}(s) \ds \le \beta^u \Lambda^u \omega/\mu^\ell,$$
which is a contradiction since the sequence $(\ln(\eps_2 n/2))_{n\in \N}$ goes to $+\infty$ as $n \to +\infty$, and thus is not bounded.

We conclude that there is $\eps_2>0$ such that~\eqref{eq:contradicao-persist-I-2} holds. Letting $L_3=\eps_2$, we obtain $I_\lbd(t)\ge L_3$ for all $\lbd \in (0,1]$.

Since $I_\lbd(t)\le S_\lbd(t)+I_\lbd(t)$, by Lemma~\ref{lemma:subsection-persist-1}, we can take $L_4=L_2$ and the result is established.
\end{proof}
%%%%%%%%%%%%%%%%%%%%%%%%%%%%%%%%%%%%%%%%%%%%%%%%%%%%%%%%%%%%%%%%%%%%%%
\subsection{Setting where Mawhin's continuation theorem will be applied.}\label{subsection:Proof-subsection2}
%%%%%%%%%%%%%%%%%%%%%%%%%%%%%%%%%%%%%%%%%%%%%%%%%%%%%%%%%%%%%%%%%%%%%%
To apply Mawhin's continuation theorem to our model we make the change of variables: $S(t)=e^{u_1(t)}$, $I(t)=e^{u_2(t)}$ and $P(t)=e^{u_3(t)}$. With this change of variables, system~\eqref{eq:principal} becomes
\begin{equation}\label{eq:principal-aux-2}
\begin{cases}
u_1'=\Lambda(t)e^{-u_1}-a(t)f(e^{u_1},e^{u_3})e^{u_3-u_1}-\beta(t)e^{u_2}-\mu(t)\\
u_2'=\beta(t)e^{u_1}-\eta(t)e^{u_3}-c(t)\\
u_3'=\gamma(t)a(t)f(e^{u_1},e^{u_3})+\theta(t)\eta(t)e^{u_2}-b(t)e^{u_3}+r(t)
\end{cases}.
\end{equation}

Note that, if $(u_1^*(t),u_2^*(t),u_3^*(t))$ is an $\omega$-periodic solution of the system~\eqref{eq:principal-aux-2} then $(e^{u_1(t)},e^{u_2(t)},e^{u_3(t)})$ is an $\omega$-periodic solution of system~\eqref{eq:principal}.

 To define the operators in Mawhin's theorem (see appendix~\ref{appendix:MCT}), we need to consider the Banach spaces $(X,\|\cdot\|)$ and $(Z,\|\cdot\|)$ where $X$ and $Z$ are the space of $\omega$-periodic continuous functions $u:\R \to \R^3$:
 $$X=Z=\{u=(u_1,u_2,u_3) \in C(\mathbb{R},\mathbb{R}^3): u(t)=u(t+\omega)\}$$
 and
 $$\|u\|=\max_{t \in [0,\omega]}|u_1(t)|+\max_{t \in [0,\omega]}|u_2(t)|+\max_{t \in [0,\omega]}|u_3(t)|.$$

Next, we consider the linear map $\mathcal{L}: X \cap C^1(\mathbb{R},\mathbb{R}^3) \rightarrow Z$ given by \begin{equation}\label{eq:linear-map}
\mathcal{L}u(t)=\frac{du(t)}{dt}
\end{equation}
and the map $\mathcal{N}: X \rightarrow Z$ defined by
\begin{equation}\label{eq:linear-map-L-comp}
\mathcal{N}u(t)=\left[\begin{array}{l}
\Lambda(t)e^{-u_1(t)}-a(t)f(e^{u_1(t)},e^{u_3(t)})e^{u_3(t)-u_1(t)}-\beta(t)e^{u_2(t)}-\mu(t)\\
\beta(t)e^{u_1(t)}-\eta(t)e^{u_3(t)}-c(t)\\
\gamma(t)a(t)f(e^{u_1(t)},e^{u_3(t)})+\theta(t)\eta(t)e^{u_2(t)}-b(t)e^{u_3(t)}+r(t)
\end{array}\right].
\end{equation}

In the following lemma we show that the linear map in~\eqref{eq:linear-map} is a Fredholm mapping of index zero
\begin{lemma}\label{lemma:Fredholm}
The linear map $\mathcal{L}$ in~\eqref{eq:linear-map} is a Fredholm mapping of index zero.
\end{lemma}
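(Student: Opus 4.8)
The plan is to verify the two defining properties of a Fredholm operator of index zero directly: that $\ker\mathcal L$ and $\operatorname{coker}\mathcal L = Z/\operatorname{Im}\mathcal L$ are finite-dimensional, that $\operatorname{Im}\mathcal L$ is closed in $Z$, and that $\dim\ker\mathcal L = \operatorname{codim}\operatorname{Im}\mathcal L$. Since $\mathcal L u = u'$, the kernel consists of those $\omega$-periodic $C^1$ functions with zero derivative, i.e. the constant functions $\R\to\R^3$, so $\ker\mathcal L \cong \R^3$ and in particular $\dim\ker\mathcal L = 3 < \infty$.

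Next I would identify the image. For $v\in Z$, the equation $u' = v$ has an $\omega$-periodic solution if and only if $\int_0^\omega v(s)\,ds = 0$ (necessity: integrate $u'$ over a period using periodicity of $u$; sufficiency: set $u(t) = u(0) + \int_0^t v(s)\,ds$, which is $\omega$-periodic precisely when the integral over a period vanishes). Hence
\[
\operatorname{Im}\mathcal L = \left\{ v \in Z : \int_0^\omega v(s)\,ds = 0 \right\}.
\]
This is the kernel of the bounded linear functional $v\mapsto \int_0^\omega v(s)\,ds$ from $Z$ to $\R^3$, which is surjective (constant functions map onto $\R^3$), so $\operatorname{Im}\mathcal L$ is closed and has codimension $3$ in $Z$. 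Therefore $\operatorname{codim}\operatorname{Im}\mathcal L = 3 = \dim\ker\mathcal L$, so the index is zero, and $\mathcal L$ is a Fredholm mapping of index zero.

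To make the statement fit cleanly into the Mawhin framework, I would also exhibit the associated projectors, which are needed later anyway: the projector $P:X\to X$ onto $\ker\mathcal L$ given by $Pu = \frac1\omega\int_0^\omega u(s)\,ds$ (the average, a constant function), and the projector $Q:Z\to Z$ given by $Qv = \frac1\omega\int_0^\omega v(s)\,ds$, whose kernel is exactly $\operatorname{Im}\mathcal L$; then $X = \ker\mathcal L \oplus \ker P$ and $Z = \operatorname{Im}\mathcal L \oplus \operatorname{Im}Q$ with $\operatorname{Im}Q \cong \R^3 \cong \ker\mathcal L$.

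There is no real obstacle here — this is the standard computation underlying essentially every application of Mawhin's continuation theorem to periodic ODE systems, and the only points requiring a word of care are checking that $\operatorname{Im}\mathcal L$ is genuinely closed (which follows from its description as the kernel of a bounded surjective linear map onto the finite-dimensional space $\R^3$) and confirming the continuity/boundedness of the averaging functional on $(Z,\|\cdot\|)$, both of which are immediate.
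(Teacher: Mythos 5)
Your proof is correct and follows essentially the same route as the paper: identify $\ker\mathcal L$ with the constants, identify $\operatorname{Im}\mathcal L$ with the zero-mean functions, and conclude $\dim\ker\mathcal L = \operatorname{codim}\operatorname{Im}\mathcal L = 3$. The only minor difference is in verifying closedness of the image: you note it is the kernel of the bounded surjective averaging map $Z\to\R^3$, while the paper argues directly with a convergent sequence in $\operatorname{Im}\mathcal L$ and passes the zero-integral condition to the limit; both are standard and equally valid.
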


\begin{proof}
We have
\[
\begin{split}
\ker \mathcal{L}
& =\left\{ (u_1,u_2,u_3) \in X \cap C^1(\mathbb{R},\mathbb{R}^3): \frac{du_i(t)}{dt}=0, \ \ i=1,2,3 \right\}\\
& = \left\{ (u_1,u_2,u_3) \in X \cap C^1(\mathbb{R},\mathbb{R}^3): u_i \ \ \text{is constant}, \ \ i=1,2,3 \right\}
\end{split}
\]
and thus $\ker \mathcal{L}$ can be identified with $\R^3$. Therefore $\dim \ker \mathcal{L}=3$. On the other hand
\[
\begin{split}
\imagem \mathcal{L}
& =\left\{ (z_1,z_2,z_3) \in Z: \ \exists \ u \in X \cap C^1(\mathbb{R},\mathbb{R}^3): \frac{du_i(t)}{dt}=z_i(t), \ i=1,2,3 \right\}\\
& =\left\{ (z_1,z_2,z_3) \in Z: \int_0^\omega z_i(s)\ds=0, \ i=1,2,3\right\}.
\end{split}
\]
and any $z \in Z$ can be written as $z=\tilde{z}+\alpha$, where $\alpha=(\alpha_1,\alpha_2,\alpha_3) \in \R^3$ and $\tilde{z} \in \imagem \mathcal L$. Thus the complementary space of $\imagem \mathcal{L}$ consists of the constant functions. Thus, the complementary space has dimension 3 and therefore $\codim \imagem \mathcal{L}=3$.

Given any sequence $(z_n)$ in $\imagem \mathcal L$ such that
$$z_n=((z_1)_n,(z_2)_n,(z_3)_n) \to z=(z_1,z_2,z_3),$$
we have, for $i=1,2,3$ (note that $z \in Z$ since $Z$ is a Banach space and thus it is integrable in $[0,\omega]$ since it is continuous in that interval),
\[
\int_0^\omega z_i(s)\ds=\int_0^\omega \lim_{n \to +\infty} (z_i)_n(s)\ds=\lim_{n \to +\infty} \int_0^\omega (z_i)_n(s)\ds=0.
\]
Thus, $z \in \imagem \mathcal L$ and we conclude that $\imagem \mathcal L$ is closed in $Z$. Thus $\mathcal L$ is a Fredholm mapping of index zero.
\end{proof}

Consider the projectors $P:X \rightarrow X$ and $Q:Z \rightarrow Z$ given by
$$Pu(t)=\frac{1}{\omega}\int_{0}^{\omega}u(s)ds \quad \text{ and } \quad Qz(t)=\frac{1}{\omega}\int_{0}^{\omega}z(s)ds.$$
Note that  $\imagem P = \ker \mathcal{L}$ and that $\ker Q= \imagem (I-Q)= \imagem \mathcal{L}$.

Consider the generalized inverse of $\mathcal{L}$, $\mathcal K:\imagem \mathcal{L} \rightarrow D \cap \ker P$, given by $$\mathcal Kz(t)=\int_{0}^{t}z(s)ds-\frac{1}{\omega}\int_{0}^{\omega}\int_{0}^{r}z(s)\ds\dr$$
the operator $Q\mathcal{N}:X \rightarrow Z$ given by
\[
Q\mathcal{N}u(t)=\left[\begin{array}{l}
\frac{1}{\omega}\dint_{0}^{\omega}\Lambda(s)e^{-u_1(s)}-a(s)f(e^{u_1(s)},e^{u_3(s)})e^{u_3(s)}-\beta(s)e^{u_2(s)}\ds-\overline{\mu}\\[4mm]
\frac{1}{\omega}\dint_{0}^{\omega}\beta(s)e^{u_1(s)}-\eta(s)e^{u_3(s)}\ds-\overline{c}\\[4mm]
\frac{1}{\omega}\dint_{0}^{\omega}\gamma(s)a(s)f(e^{u_1(s)},e^{u_3(s)})e^{u_3(s)}+\theta(s)\eta(s)e^{u_2(s)}- b(s)e^{u_3(s)}\ds+\overline{r}
\end{array}\right]
\]
and the mapping  $\mathcal K(I-Q)\mathcal{N}:X \rightarrow D \cap \ker P$ given by
\[
\mathcal K(I-Q)\mathcal{N}u(t)=B_1(t)-B_2(t)-B_3(t),
\]
where
\[
\begin{split}
B_1(t)=\left[
\begin{array}{l}
\dint_{0}^{t}\Lambda(s)e^{-u_1(s)}-a(s)f(e^{u_1(s)},e^{u_3(s)})e^{u_3(s)}-\beta(s)e^{u_2(s)}-\mu(s)\ds\\[4mm]
\dint_{0}^{t}\beta(s)e^{u_1(s)}-\eta(s)e^{u_3(s)}-c(s)\ds\\[4mm]
\dint_{0}^{t}\gamma(s)a(s)f(e^{u_1(s)},e^{u_3(s)})e^{u_3(s)}+\theta(s)\eta(s)e^{u_2(s)}- b(s)e^{u_3(s)}dt+r(s)\ds
\end{array}\right],
\end{split}
\]
\small{
\[
B_2(t)=\left[\begin{array}{l}
\frac{1}{\omega}\dint_{0}^\omega \dint_{0}^r\Lambda(s)e^{-u_1(s)}-a(s)f(e^{u_1(s)},e^{u_3(s)})e^{u_3(s)}-\beta(s)e^{u_2(s)}-\mu(s)\ds\dr\\[4mm]
\frac{1}{\omega}\dint_{0}^\omega \dint_{0}^r\beta(s)e^{u_1(s)}-\eta(s)e^{u_3(s)}-c(s)\ds\dr\\[4mm]
\frac{1}{\omega}\dint_{0}^\omega \dint_{0}^r\gamma(s)a(s)f(e^{u_1(s)},e^{u_3(s)})e^{u_3(s)}+\theta(s)\eta(s)e^{u_2(s)}- b(s)e^{u_3(s)}+r(s)\ds\dr
\end{array}\right],
\]
}
and
\[
B_3(t)=\left(\frac{t}{\omega}-\frac{1}{2}\right)
\left[\begin{array}{l}
\dint_{0}^\omega\Lambda(s)e^{-u_1(s)}-a(s)f(e^{u_1(s)},e^{u_3(s)})e^{u_3(s)}-\beta(s)e^{u_2(s)}-\mu(s)\ds\\[4mm]
\dint_{0}^\omega\beta(s)e^{u_1(s)}-\eta(s)e^{u_3(s)}-c(s)\ds\\[4mm]
\dint_{0}^\omega\gamma(s)a(s)f(e^{u_1(s)},e^{u_3(s)})e^{u_3(s)}+\theta(s)\eta(s)e^{u_2(s)}- b(s)e^{u_3(s)}+r(s)\ds
\end{array}\right].
\]

The next lemma shows that $\mathcal{N}$ is $\mathcal L$-compact in the closure of any open bounded subset of its domain.
\begin{lemma}
The map $\mathcal{N}$ is $\mathcal L$-compact in the closure of any open bounded set $U \subseteq X$.
\end{lemma}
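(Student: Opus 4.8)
The plan is to verify directly the two conditions in the definition of $\mathcal{L}$-compactness on $\overline{U}$ (see appendix~\ref{appendix:MCT}): that $Q\mathcal{N}(\overline{U})$ is bounded in $Z$, and that $\mathcal{K}(I-Q)\mathcal{N}\colon\overline{U}\to X$ is continuous and carries $\overline{U}$ into a relatively compact subset of $X$. The whole argument rests on one elementary observation: since $U$ is bounded there is $M>0$ with $\norm{u}\le M$ for every $u\in\overline{U}$, so each $e^{\pm u_i(t)}$ ranges in the compact interval $[e^{-M},e^M]$; by condition~S\ref{cond-2}) the continuous function $f$ is bounded on $[e^{-M},e^M]^2$, and by S\ref{cond-1}) all the coefficient functions, being continuous and $\omega$-periodic, are bounded. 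Hence each of the three scalar integrands appearing in $\mathcal{N}u$, $Q\mathcal{N}u$, $B_1$, $B_2$ and $B_3$ is dominated by a constant depending only on $M$ and the data, uniformly in $u\in\overline{U}$ and $t\in[0,\omega]$.

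Granting these uniform bounds, boundedness of $Q\mathcal{N}(\overline{U})$ is immediate: each component of $Q\mathcal{N}u$ is the mean over $[0,\omega]$ of a uniformly bounded integrand, so $Q\mathcal{N}(\overline{U})$ lies in a bounded subset of the $3$-dimensional space of constant functions in $Z$. For continuity of $\mathcal{K}(I-Q)\mathcal{N}$, I would first check that $\mathcal{N}\colon X\to Z$ is continuous: the maps $u\mapsto e^{\pm u_i}$ are continuous from $X$ into $C([0,\omega])$, and $u\mapsto f(e^{u_1},e^{u_3})$ is continuous from $X$ into $C([0,\omega])$ because $f$ is uniformly continuous on the compact square above (Heine--Cantor), so a $C^0$-small perturbation of $u$ produces a $C^0$-small perturbation of $f(e^{u_1},e^{u_3})$; finite sums and products of such bounded continuous functions of $t$ with the bounded continuous coefficients stay continuous into $Z$. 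Since $Q$ and $\mathcal{K}$ are bounded linear operators (the latter being a composition of integrations), the composition $\mathcal{K}(I-Q)\mathcal{N}$ is continuous on $\overline{U}$.

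Finally, relative compactness of $\mathcal{K}(I-Q)\mathcal{N}(\overline{U})$ in $X=C([0,\omega],\R^3)$ I would obtain from the Arzel\`a--Ascoli theorem applied to the explicit formula $\mathcal{K}(I-Q)\mathcal{N}u=B_1(t)-B_2(t)-B_3(t)$. Uniform boundedness of the family $\{\mathcal{K}(I-Q)\mathcal{N}u:u\in\overline{U}\}$ follows from the uniform integrand bounds, since $B_1(t)$ is an integral over $[0,t]\subseteq[0,\omega]$, $B_2$ is independent of $t$, and $B_3(t)$ is the integral over $[0,\omega]$ times the factor $t/\omega-1/2\in[-1/2,1/2]$. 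For equicontinuity, observe that $\frac{d}{dt}B_1(t)$ is exactly the integrand evaluated at $t$, hence uniformly bounded; $B_2'(t)=0$; and $B_3(t)$ is affine in $t$ with slope bounded uniformly in $u\in\overline{U}$. Thus the family is equi-Lipschitz in $t$, a fortiori equicontinuous, and Arzel\`a--Ascoli yields a relatively compact subset of $X$. Combining the three steps, $\mathcal{N}$ is $\mathcal{L}$-compact on $\overline{U}$. The only point requiring genuine care, rather than bookkeeping, is the continuity of $u\mapsto f(e^{u_1(\cdot)},e^{u_3(\cdot)})$ into $C([0,\omega])$, which is where the uniform continuity of $f$ on compacta is used; all remaining estimates are routine consequences of the uniform bounds established at the outset.
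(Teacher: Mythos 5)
Your proposal is correct and follows essentially the same route as the paper's proof: fix $M$ bounding $\norm{u}$ on $\overline{U}$, use this to bound the integrands uniformly, conclude boundedness of $Q\mathcal{N}(\overline{U})$ directly, and then obtain relative compactness of $\mathcal K(I-Q)\mathcal{N}(\overline{U})$ from Arzel\`a--Ascoli via uniform boundedness and equi-Lipschitz continuity in $t$. The only substantive difference is that you explicitly verify continuity of $u\mapsto f(\e^{u_1(\cdot)},\e^{u_3(\cdot)})$ (hence of $\mathcal{N}$ and of $\mathcal K(I-Q)\mathcal{N}$) via uniform continuity of $f$ on compacta; the paper takes this continuity as given, since the definition of $\mathcal L$-compactness already presupposes $\mathcal N$ continuous, and proceeds straight to the boundedness and Ascoli estimates. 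Your extra step is a sound and slightly more self-contained touch, not a different method.
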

\begin{proof}
Let $U \subseteq X$ be an open bounded set and $\overline{U}$ its closure in $X$. Then, there is $M>0$ such that, for any $u=(u_1,u_2,u_3) \in \overline{U}$, we have that $|u_i(t)| \leqslant M$, $i=1,2,3$. Letting $Q\mathcal{N}u=((Q\mathcal{N})_1u,(Q\mathcal{N})_2u,(Q\mathcal{N})_3u)$, we have
\[
\left|(Q\mathcal{N})_1u(t)\right|\le \e^M \left(\bar\Lambda+\bar a f(e^M,0)+\bar\beta\right)+\bar\mu,
\]
\[
\left|(Q\mathcal{N})_2u(t)\right|\le e^M(\bar\beta+\bar\eta)+\overline{c}
\]
\[
\left|(Q\mathcal{N})_3u(t)\right|\le \e^M\left(\overline{\gamma a}f(e^M,0)+\overline{\theta\eta}+ \bar b\right)+\overline{r}
\]
and we conclude that $Q\mathcal{N}(\overline{U})$ is bounded.

Let now $$\mathcal K(I-Q)\mathcal{N}u =\left((\mathcal K(I-Q)\mathcal{N})_1u,(\mathcal K(I-Q)\mathcal{N})_2u,(\mathcal K(I-Q)\mathcal{N})_3u\right).$$
Let $B \subset X$ be a bounded set. Note that the boundedness of $B$ implies that there is $M$ such that $|u_i|<M$, for all $i=1,2,3$, and all $u=(u_1,u_2,u_3) \in B$. It is immediate that $\{\mathcal K(I-Q)\mathcal{N}u:u \in B\}$ is pointwise bounded. Given $u=(u_1,u_2,u_3)_{n \in\N} \in B$ we have
\begin{equation}\label{eq:equicont-1}
\begin{split}
& (\mathcal K(I-Q)\mathcal{N})_1u(t)-(\mathcal K(I-Q)\mathcal{N})_1u(v))\\
= &\dint_v^t \Lambda(s)e^{-u_1(s)}-a(s)f(e^{u_1(s)},e^{u_2(s)})e^{u_2(s)}-\beta(s)e^{u_2(s)}-\mu(s)\ds\\
&-\frac{t-v}{\omega}\dint_{0}^\omega\Lambda(s)e^{-u_1(s)}-a(s)f(e^{u_1(s)},e^{u_2(s)})e^{u_2(s)}-\beta(s)e^{u_2(s)}-\mu(s)\ds\\
\le & 2(t-v)\left[e^M(\Lambda^u+a^u f(e^M,0)+\beta^u e^M)+\mu^M\right],
\end{split}
\end{equation}
and similarly
\begin{equation}\label{eq:equicont-2}
\begin{split}
& (\mathcal K(I-Q)\mathcal{N})_2u(t)-(\mathcal K(I-Q)\mathcal{N})_2u(v)
\le2(t-v)\left[e^M(\beta^u+\eta^u)+c^u\right]
\end{split}
\end{equation}
and
\begin{equation}\label{eq:equicont-3}
\begin{split}
& (\mathcal K(I-Q)\mathcal{N})_3u(t)-(\mathcal K(I-Q)\mathcal{N})_3u(v))\\
& \le
2(t-v)\left[(\gamma^ua^u f(e^M,0)+\theta^u\eta^u+ b^u)e^M+r^u\right].
\end{split}
\end{equation}
By~\eqref{eq:equicont-1},~\eqref{eq:equicont-2} and~\eqref{eq:equicont-3}, we conclude that $\{\mathcal K(I-Q)\mathcal{N}u:u \in B\}$ is equicontinuous.
Therefore, by Ascoli-Arzela's theorem, $\mathcal K(I-Q)\mathcal{N}(B)$ is relatively compact. Thus the operator $\mathcal K(I-Q)\mathcal{N}$ is compact.

We conclude that $\mathcal{N}$ is $\mathcal L$-compact in the closure of any bounded set contained in $X$.
\end{proof}

%%%%%%%%%%%%%%%%%%%%%%%%%%%%%%%%%%%%%%%%%%%%%%%%%%%%%%%%%%%%%%%%%%%%%%
\subsection{Application of Mawhin's continuation theorem.}\label{subsection:Proof-subsection3}
%%%%%%%%%%%%%%%%%%%%%%%%%%%%%%%%%%%%%%%%%%%%%%%%%%%%%%%%%%%%%%%%%%%%%%
In this section we will construct the set where, applying Mahwin's continuation theorem, we will find the periodic orbit in the statement of our result.

Consider the system of algebraic equations:
\begin{equation}\label{eq:principal-aux-3}
\begin{cases}
\overline{\Lambda}e^{-u_1}-\overline{a}f(e^{u_1},e^{u_3})e^{u_3-u_1}-\overline{\beta}e^{u_2}-\overline{\mu}=0\\
\overline{\beta}e^{u_1}-\overline{\eta}e^{u_3}-\overline{c}=0\\
\overline{\gamma a}f(e^{u_1},e^{u_3})+ \overline{\theta\eta}e^{u_2}-\overline{b}e^{u_3}+\overline{r}=0
\end{cases}.
\end{equation}
By the second and third equations we get
\[
\e^{u_1}=\frac{\overline{\eta}\e^{u_3}+\overline{c}}{\overline{\beta}}
\quad \text{ and } \quad
\e^{u_2}=-\frac{\overline{\gamma a}}{\overline{\theta\eta}}f\left(\frac{\overline{\eta}\e^{u_3}+\overline{c}}{\overline{\beta}},e^{u_3}\right)
+\frac{\overline{b}}{\overline{\theta\eta}}e^{u_3}-\frac{\overline{r}}{\overline{\theta\eta}}.
\]
Therefore, using the first equation, we get
$$G_1(\e^{u_3})-G_2(\e^{u_3})f\left(\frac{\overline{\eta}\e^{u_3}+\overline{c}}{\overline{\beta}},e^{u_3}\right)
-G_3(\e^{u_3})=0,$$
where
$$
G_1(x)=\dfrac{\overline{\Lambda}\,\overline{\beta}}{\overline{\eta}x+\overline{c}},
\quad G_2(x)=\dfrac{\overline{a}\overline{\beta}x}{\overline{\eta}x+\overline{c}}-\dfrac{\overline{\beta}\overline{\gamma a}}{\overline{\theta \eta}} \quad \text{ and } \quad G_3(x)=\dfrac{\overline{\beta}\, \overline{b}}{\overline{\theta \eta}}x-\dfrac{\overline{\beta}\overline{r}}{\overline{\theta \eta}}+\overline{\mu}
$$
Consider the function $G:[\overline{r}/\overline{b},+\infty[\to \R$ (notice that, by the third equation in~\eqref{eq:principal-aux-3},
we have $e^{u_3}\ge\overline{r}/\overline{b}$), given by
$$G(x)=G_1(x)-G_2(x)f\left(\frac{\overline{\eta}x+\overline{c}}{\overline{\beta}},x\right)
-G_3(x)$$
and observe that function $G_1$ is decreasing and functions $G_2$ and $G_3$ are increasing. Thus, since the function $[\overline{r}/\overline{b},+\infty[ \ni x \mapsto f((\overline{\eta}x+\overline{c})/\overline{\beta},x)$ is nondecreasing, we conclude that
$$[\overline{r}/\overline{b},+\infty[ \ni x \mapsto -G_2(x)f((\overline{\eta}x+\overline{c})/\overline{\beta},x)$$
is a decreasing function. It is immediate the function $[\overline{r}/\overline{b},+\infty[ \ni x \mapsto -G_3(x)$ is decreasing. Consequently $G$ is a decreasing function and equation~\eqref{eq:principal-aux-3} has, at most, one solution.

It is easy to verify that
$$\lim_{x \to +\infty} G(x)=-\infty$$
and, by the hypothesis in our theorem
\[
\begin{split}
G(\overline{r}/\overline{b})
& =\dfrac{\overline{\Lambda}\,\overline{\beta}}{\overline{\eta}\,\overline{r}/\overline{b}+\overline{c}}
-\left(\dfrac{\overline{a}\overline{\beta}\overline{r}/\overline{b}}{\overline{\eta}\,\overline{r}/\overline{b}+\overline{c}}-\dfrac{\overline{\beta}\overline{\gamma a}}{\overline{\theta \eta}}\right)f\left(\frac{\overline{\eta}\,\overline{r}/\overline{b}+\overline{c}}{\overline{\beta}},\overline{r}/\overline{b}\right)
-\overline{\mu}\\
& = \overline{\mu}\left(\overline{\mathcal R}_0-1-\dfrac{\overline{\beta}}{\overline{\mu}}\left(\dfrac{\overline{a}\,\overline{r}}{\overline{\eta}\,\overline{r}+\overline{c}\overline{b}} -\dfrac{\overline{\gamma a}}{\overline{\theta \eta}}\right) f\left(\frac{\overline{\eta}\,\overline{r}/\overline{b}+\overline{c}}{\overline{\beta}},\overline{r}/\overline{b}\right)
\right)>0.
\end{split}
\]
Thus we conclude that there is a unique solution of equation~\eqref{eq:principal-aux-3}. Denote this solution by $p^*(t)=(p_1^*,p_2^*,p_3^*)$.

By Lemmas~\ref{lemma:subsection-persist-1}, \ref{lemma:subsection-persist-2} and \ref{lemma:subsection-persist-3}, there is a constant $M_0>0$ such that $\|u_\lambda(t)\|<M_0$, for any $t \in [0,\omega]$ and any periodic solution $u_\lambda(t)$ of~\eqref{eq:principal-aux}. Let
\begin{equation}\label{eq:set-Mawhin}
U=\{(u_1,u_2,u_3) \in X: \|(u_1,u_2,u_3)\|<M_0+\|p^*\| \}.
\end{equation}
Conditions M1. and M2. in Mawhin's continuation theorem (see appendix~\ref{appendix:MCT}) are fulfilled in the set $U$ defined in~\eqref{eq:set-Mawhin}.

Using the notation $v=(\e^{p^*_1},\e^{p^*_3})$, the Jacobian matrix of the vector field corresponding to~\eqref{eq:principal-aux-3} computed in $(p_1^*,p_2^*,p_3^*)$ is
\[
J=
\left[
\begin{array}{ccc}
-\overline{a}\frac{\partial f}{\partial S}(v)\e^{p_3^*}-\overline{\beta}\e^{p_2^*}-\overline{\mu} & -\overline{\beta}\e^{p_2^*} &
-\overline{a}\frac{\partial f}{\partial P}(v)\e^{2p_3^*-p_1^*}-\overline{a}f(v)\e^{p_3^*-p_1^*}\\
\overline{\beta}\e^{p_1^*} & 0 & -\overline{\eta}\e^{p_3^*}\\
\overline{\gamma a}\frac{\partial f}{\partial S}(v)\e^{p_1^*} &
\overline{\theta \eta}\e^{p_2^*} & \overline{\gamma a}\frac{\partial f}{\partial P}(v)\e^{p_3^*}-\overline{b}\e^{p_3^*}
\end{array}
\right].
\]
Since we are assuming that $\overline{\beta}\overline{\gamma a}-\overline{\theta\eta}\overline{a}\le 0$, we have
\[
\begin{split}
& \det J(p_1^*,p_2^*,p_3^*)\\
=&  -\overline{\beta}\e^{p_1^*} \left(-\overline{\beta}\e^{p_2^*}\left(\overline{\gamma a}\frac{\partial f}{\partial P}(v)\e^{p_3^*}-\overline{b}\e^{p_3^*}\right)+\left(\overline{a}\frac{\partial f}{\partial P}(v)\e^{2p_3^*-p_1^*}+\overline{a}f(v)\e^{p_3^*-p_1^*}\right) \overline{\theta \eta}\e^{p_2^*} \right)\\
&+\overline{\eta}\e^{p_3^*}\left(
\left(-\overline{a}\frac{\partial f}{\partial S}(v)\e^{p_3^*}-\overline{\beta}\e^{p_2^*}-\overline{\mu}\right)\overline{\theta \eta}\e^{p_2^*}
+\overline{\beta}\e^{p_2^*}\overline{\gamma a}\frac{\partial f}{\partial S}(v)\e^{p_1^*}\right)\\
=&  -\overline{\beta}\e^{p_2^*+p_3^*} \left(-\overline{\beta}\overline{\gamma a}\frac{\partial f}{\partial P}(v)\e^{p_1^*}+\overline{\beta}\overline{b}\e^{p_1^*}
+\overline{\theta \eta}\overline{a}\frac{\partial f}{\partial P}(v)\e^{p_3^*}+\overline{\theta \eta}\overline{a}f(v)\right) \\
&+\overline{\eta}\e^{p_3^*+p_2^*}\left(
-\overline{a}\frac{\partial f}{\partial S}(v)\e^{p_3^*}\overline{\theta \eta}-\overline{\beta}\e^{p_2^*}\overline{\theta \eta}-\overline{\mu}\overline{\theta \eta}
+\overline{\beta}\overline{\gamma a}\frac{\partial f}{\partial S}(v)\e^{p_1^*}\right)\\
=&  (\overline{\beta}\overline{\gamma a}-\overline{\theta\eta}\overline{a})\left(\overline{\beta}\frac{\partial f}{\partial P}(v)+\overline{\eta}\frac{\partial f}{\partial S}(v)\right) -\overline{\beta}\e^{p_2^*+p_3^*} \left(\overline{\beta}\overline{b}\e^{p_1^*}
+\overline{\theta \eta}\overline{a}f(v)\right) \\
&-\overline{\eta}\e^{p_3^*+p_2^*}\left(
\overline{\beta}\e^{p_2^*}\overline{\theta \eta}+\overline{\mu}\overline{\theta \eta}\right)<0.\\
\end{split}
\]
Let $\mathcal{I}: \text{Im} Q \rightarrow \text{ker} \mathcal{L}$ be an isomorphism. Thus
\begin{equation}\label{eq:cond-3-Mawhin}
\deg(\mathcal I\mathcal Q\mathcal N, U\cap\ker\mathcal L,0) = \det J(p_1^*,p_2^*,p_3^*) \ne 0
\end{equation}
and condition M3) in Mawhin's continuation theorem (see appendix~\ref{appendix:MCT}) holds.
Taking into account Lemma~\ref{lemma:subsection-persist-3}, the proof of Theorem~\ref{the:main} is completed.

%%%%%%%%%%%%%%%%%%%%%%%%%%%%%%%%%%%%%%%%%%%%%%%%%%%%%%%%%%%%%%%%%%%%%%
\section{Examples.}\label{section:examples}
%%%%%%%%%%%%%%%%%%%%%%%%%%%%%%%%%%%%%%%%%%%%%%%%%%%%%%%%%%%%%%%%%%%%%%
In this section we present some examples to illustrate our main result.
%%%%%%%%%%%%%%%%%%%%%%%%%%%%%%%%%%%%%%%%%%%%%%%%%%%%%%%%%%%%%%%%%%%%%%
\subsection{A model with no predation on susceptible preys.}\label{subsection:example-no-predation-susceptible-prey}
%%%%%%%%%%%%%%%%%%%%%%%%%%%%%%%%%%%%%%%%%%%%%%%%%%%%%%%%%%%%%%%%%%%%%%
Letting $a(t)\equiv 0$ or $f \equiv 0$ in system~\eqref{eq:principal}, and still assuming that the real valued functions $\Lambda$, $\mu$, $\beta$, $\eta$, $c$, $\gamma$, $r$, $\theta$ and $b$ are periodic with period $\omega$, nonnegative, continuous and also that $\bar\Lambda>0$, $\bar\mu>0$, $\bar r>0$ and $\bar b>0$, we obtain the periodic model considered in~\cite{Silva-JMAA-2017}:
\begin{equation}\label{eq:principal-example-no-predation-susceptible-prey}
\begin{cases}
S'=\Lambda(t)-\mu(t)S-\beta(t)SI\\
I'=\beta(t)SI-\eta(t)PI-c(t)I\\
P'=(r(t)-b(t)P)P+\theta(t)\eta(t)PI
\end{cases}
\end{equation}
Note that conditions S\ref{cond-1}) and S\ref{cond-7}) are assumed and conditions S\ref{cond-2}) to S\ref{cond-6}) and S\ref{cond-8}) are trivially satisfied since $f\equiv 0$.
Also note that system~\eqref{eq:auxiliary-system-SP} becomes in this context
\begin{equation}\label{eq:auxiliary-system-example-no-predation-susceptible-prey}
\begin{cases}
x'=\lbd(\Lambda(t)-\mu(t)x-\eps_1 x)\\
z'=\lbd(r(t)- b(t)z+\eps_2)z
\end{cases}
\end{equation}
and, by Lemmas 1 to 4 in~\cite{Niu-Zhang-Teng-AMM-2011} we conclude that condition~S\ref{cond-9}) holds in this setting.
Note also that condition~\eqref{cond:teo} becomes $\overline{\cR}_0>1$ and condition $\overline{\gamma a} \,\overline{\beta}-\overline{\theta\eta}\overline{a}\le 0$ is trivially satisfied since we can take $\gamma=0$. We obtain the following corollary that recovers the result in~\cite{Silva-JMAA-2017}:
\begin{corollary}
If $\widetilde{\cR}_0>1$ and $\overline{R}_0>1$ hold, then system~\eqref{eq:principal-example-no-predation-susceptible-prey} possesses an endemic periodic orbit of period $\omega$.
\end{corollary}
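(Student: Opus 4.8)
The plan is to obtain the corollary as an immediate specialization of Theorem~\ref{the:main}: the system~\eqref{eq:principal-example-no-predation-susceptible-prey} is precisely~\eqref{eq:principal} with $f\equiv 0$ (equivalently $a\equiv 0$), so it suffices to check that, under the standing assumptions on the coefficient functions together with $\bar\Lambda,\bar\mu,\bar r,\bar b>0$ and the two hypotheses $\widetilde{\cR}_0>1$ and $\overline{R}_0>1$, every hypothesis of Theorem~\ref{the:main} is in force; the endemic $\omega$-periodic orbit then is exactly the one produced by that theorem.

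First I would run through the structural hypotheses S\ref{cond-1})--S\ref{cond-9}). Conditions S\ref{cond-1}) and S\ref{cond-7}) are the standing assumptions on $\Lambda,\mu,\beta,\eta,c,\gamma,r,\theta,b$. With $f\equiv 0$, conditions S\ref{cond-2})--S\ref{cond-6}) hold vacuously ($f$ is nonnegative and continuous, the monotonicity requirements and S\ref{cond-5}) hold because $\partial f/\partial x=\partial f/\partial z\equiv 0$), and S\ref{cond-8}) holds with $\alpha=1$ and any $K>0$ since $f(x,0)=0\le Kx$. For S\ref{cond-9}) I would note that, with $f\equiv 0$, the auxiliary system~\eqref{eq:auxiliary-system-SP} decouples into the two scalar equations of~\eqref{eq:auxiliary-system-example-no-predation-susceptible-prey}; Lemmas~1--4 of~\cite{Niu-Zhang-Teng-AMM-2011} guarantee, for each $\lbd\in(0,1]$ and all small $\eps_1,\eps_2\ge 0$, a unique positive globally asymptotically stable $\omega$-periodic solution of each of these equations, together with continuous dependence on $(\eps_1,\eps_2)$, which is exactly what S\ref{cond-9}) requires.

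Next I would check the remaining hypotheses of Theorem~\ref{the:main}. The condition $\widetilde{\cR}_0>1$ is assumed. Since~\eqref{eq:principal-example-no-predation-susceptible-prey} carries no biomass-transfer term from susceptible prey, we may take $\gamma\equiv 0$, so $\overline{\gamma a}=0$ and hence $\overline{\gamma a}\,\overline{\beta}-\overline{\theta\eta}\,\overline{a}=-\overline{\theta\eta}\,\overline{a}\le 0$ by nonnegativity. Finally, because $f\equiv 0$ the factor $f\bigl((\overline{\eta}\,\overline{r}/\overline{b}+\overline{c})/\overline{\beta},\overline{r}/\overline{b}\bigr)$ on the right-hand side of~\eqref{cond:teo} vanishes, so~\eqref{cond:teo} collapses to $\overline{R}_0>1$, the last hypothesis. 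Theorem~\ref{the:main} then applies and yields an endemic periodic orbit of period $\omega$ for~\eqref{eq:principal-example-no-predation-susceptible-prey}.

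I do not expect a genuine obstacle here; the argument is bookkeeping. The only point that deserves a moment of attention is S\ref{cond-9}): one must make sure the perturbed logistic equation $z'=\lbd(r(t)-b(t)z+\eps_2)z$ stays in the class covered in~\cite{Niu-Zhang-Teng-AMM-2011} for small $\eps_2\ge 0$ (it does, since $\overline{r+\eps_2}=\bar r+\eps_2>0$ and $b$ is unchanged), and to read off from those lemmas the uniformity in $\lbd$ and the continuity in $(\eps_1,\eps_2)$ that Theorem~\ref{the:main} invokes.
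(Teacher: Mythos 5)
Your argument is correct and follows essentially the same route as the paper: specialize Theorem~\ref{the:main} by setting $f\equiv 0$ (taking $\gamma\equiv 0$ as well), verify S\ref{cond-1})--S\ref{cond-9}) and note that condition~\eqref{cond:teo} collapses to $\overline{R}_0>1$ since the $f$-term vanishes, with S\ref{cond-9}) supplied by Lemmas 1--4 of~\cite{Niu-Zhang-Teng-AMM-2011} for the decoupled system~\eqref{eq:auxiliary-system-example-no-predation-susceptible-prey}. The only cosmetic difference is that you spell out a few of the vacuous verifications more explicitly than the paper does.
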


%%%%%%%%%%%%%%%%%%%%%%%%%%%%%%%%%%%%%%%%%%%%%%%%%%%%%%%%%%%%%%%%%%%%%%
\subsection{A model with Holling-type I functional response.}\label{subsection:example-Holling-type I}
%%%%%%%%%%%%%%%%%%%%%%%%%%%%%%%%%%%%%%%%%%%%%%%%%%%%%%%%%%%%%%%%%%%%%%
Letting $f(S,P) = S$ (Holling-type I functional response) in system~\eqref{eq:principal}, and assuming that the real valued functions $\beta$, $\eta$ e $c$ are periodic with period $\omega$ and that the real valued functions $\Lambda$, $\mu$, $\gamma$, $r$, $\theta$, $b$ and $a$ are constant and positive, we obtain the periodic model:
\begin{equation}\label{eq:principal-Holling-type I}
\begin{cases}
S'=\Lambda-\mu S-aSP+\beta(t) SI\\
I'=\beta(t) SI-\eta(t) PI-c(t)I\\
P'=(r - bP)P+\gamma aSP+\theta \eta(t) PI
\end{cases}
\end{equation}
Since $f(S,P)=S$, conditions S\ref{cond-2}) to S\ref{cond-6}) are trivially satisfied. Conditions S\ref{cond-1}) and S\ref{cond-7}) are assumed  and S\ref{cond-8}) is satisfied with $K=\alpha=1$.
Notice additionally that system~\eqref{eq:auxiliary-system-SP} becomes in our context
\begin{equation}\label{eq:auxiliary-system-Holling-type I}
\begin{cases}
x'=\lbd(\Lambda-\mu x-a xz-\eps_1 x)\\
z'=\lbd(r-bz +\gamma a x +\eps_2)z
\end{cases}.
\end{equation}
System~\eqref{eq:auxiliary-system-Holling-type I} has two equilibriums: $E_1=(\Lambda/(\mu+\eps_1),0)$ and
$$E_2=\left(\dfrac{\sqrt{V^2+4\Lambda\gamma a^2/b}-V}{2\gamma a^2/b}, \, \dfrac{\sqrt{V^2+4\Lambda\gamma a^2/b}-V}{2\gamma a^2/b}+r+\eps_2\right),$$
where $V=\mu+\eps_1+a(r+\eps_2)/b$. It is easy to check that $E_2$ is locally attractive and that $E_1$ is a saddle point whose stable manifold coincides with the x-axis. If $0<\alpha<(r+\eps_2)/b$ then, in the line $z=\alpha$ the flow points upward. Additionally, if $\Lambda<\mu(\mu+\eps_1)/a$, in the line $x=\mu/a$ the flow points to the left and the $x$-coordinate of $E_1$ is less than $\mu/a$. Thus the region $R=\{(x,z) \in \R^2: 0\le x \le \mu/a \ \wedge \ z \ge \alpha \}$ is positively invariant. Since the divergence of the vector field is given by $-\mu-\eps_1+\eps_2-(a+2b)z+\gamma ax$, we conclude that it is null on the line $z=\frac{-\mu-\eps_1+\eps_2}{a+2b}+\frac{\gamma a}{a+2b}x$. Thus the divergence of the vector field doesn't change sign on the region $R$ and this forbids the existence of a peridic orbit on $R$. There is also no periodic orbit on $(\R_0^+)^2 \setminus R$ since there is no additional equilibrium in $(\R_0^+)^2$. Since $E_2$ is locally asymptotically stable, there is no homoclinic orbit conecting $E_2$ to itself. Therefore, the $\omega$-limit of any orbit in $(\R^2)^+$ must be the equilibrium point $E_2$ and the global asymptotic stability of~\eqref{eq:auxiliary-system-Holling-type I} for sufficiently small $\eps_1,\eps_2>0$ follows. We conclude that condition~S\ref{cond-9}) holds.

Notice that condition~\eqref{cond:teo} becomes $\overline{\cR}_0>1+\dfrac{a(r \theta\bar{\eta}-\bar{\gamma}\bar{\eta} r - \bar{\gamma}\bar{c}b)}{\mu\bar{\eta}\theta b}$ and condition ${\gamma}\overline{\beta}-\theta\overline{\eta} \le 0$ is trivially satisfied. We obtain the corollary that generalizes the result in~\cite{Silva-JMAA-2017}:
\begin{corollary}\label{cor}
If $\Lambda<\mu^2/a$, ${\gamma}\overline{\beta}-\theta\overline{\eta} \le 0$, $\widetilde{\cR}_0>1$ and
$$\overline{R}_0>1+\dfrac{a(r \theta\bar{\eta}-\bar{\gamma}\bar{\eta} r - \bar{\gamma}\bar{c}b)}{\mu\bar{\eta}\theta b}$$
then system~\eqref{eq:principal-Holling-type I} possesses an endemic periodic orbit of period $\omega$.
\end{corollary}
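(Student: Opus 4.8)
The strategy is to deduce the corollary from Theorem~\ref{the:main}, so the task reduces to verifying that every hypothesis of that theorem holds for system~\eqref{eq:principal-Holling-type I}. Most of this is immediate: with $f(S,P)=S$, the monotonicity and sign requirements S\ref{cond-2}) to S\ref{cond-6}) hold trivially, conditions S\ref{cond-1}) and S\ref{cond-7}) are part of the standing assumptions on the coefficient functions, and S\ref{cond-8}) holds with $K=\alpha=1$. Thus the only structural hypothesis needing a genuine argument is S\ref{cond-9}), after which it remains only to rewrite the two quantitative conditions of Theorem~\ref{the:main} in the constants of the corollary.

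The verification of S\ref{cond-9}) is the heart of the proof and the step I expect to be the main obstacle. Since system~\eqref{eq:auxiliary-system-Holling-type I} is autonomous, an $\omega$-periodic solution of it is an equilibrium, so I would first locate the equilibria: besides the boundary point $E_1=(\Lambda/(\mu+\eps_1),0)$ there is a single interior equilibrium $E_2$, given by an explicit formula, and $(\eps_1,\eps_2)\mapsto E_2$ is manifestly continuous (this settles the continuity clause of S\ref{cond-9})). Next I would read off the local phase portrait: $E_1$ is a saddle whose stable manifold is the $x$-axis, while $E_2$ is locally asymptotically stable. The global step is to exhibit, for all sufficiently small $\eps_1,\eps_2\ge 0$ simultaneously, a compact positively invariant set inside the open first quadrant; here the hypothesis $\Lambda<\mu^2/a$ gives $\Lambda<\mu(\mu+\eps_1)/a$, which makes $R=\{(x,z):0\le x\le\mu/a,\ z\ge\alpha\}$ positively invariant once $\alpha\in(0,(r+\eps_2)/b)$ is chosen suitably small. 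A Bendixson--Dulac argument, together with the fact that $E_1$ and $E_2$ are the only equilibria of the planar system in $(\R_0^+)^2$, then excludes periodic orbits, so Poincar\'e--Bendixson forces every orbit in $\{(x,z):x\ge 0,\ z>0\}$ to converge to $E_2$, which is exactly what S\ref{cond-9}) asserts. The point demanding care is uniformity in $(\eps_1,\eps_2)$ near $(0,0)$: one has to fix $\alpha$, the admissible window of $\eps$'s, and, if necessary, a Dulac multiplier so that the invariance of $R$ and the constancy of the sign of the corresponding divergence on $R$ hold for the whole one-parameter family at once.

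Finally I would translate the two quantitative hypotheses of Theorem~\ref{the:main}. Since $a$, $\gamma$ and $\theta$ are constant, $\overline{\gamma a}\,\overline{\beta}-\overline{\theta\eta}\,\overline{a}=a(\gamma\overline{\beta}-\theta\overline{\eta})$, which is $\le 0$ by the hypothesis $\gamma\overline{\beta}-\theta\overline{\eta}\le 0$ of the corollary. Because $f(S,P)=S$,
\[
f\!\left(\frac{\overline{\eta}\,\overline{r}/\overline{b}+\overline{c}}{\overline{\beta}},\ \overline{r}/\overline{b}\right)=\frac{\overline{\eta}\,\overline{r}/\overline{b}+\overline{c}}{\overline{\beta}},
\]
so inserting this value into~\eqref{cond:teo} and simplifying (using $\overline{a}=a$, $\overline{b}=b$, $\overline{r}=r$, $\overline{\mu}=\mu$, $\overline{\gamma a}=\gamma a$ and $\overline{\theta\eta}=\theta\overline{\eta}$) turns~\eqref{cond:teo} into
\[
\overline{R}_0>1+\frac{a\left(r\theta\overline{\eta}-\overline{\gamma}\,\overline{\eta}\,r-\overline{\gamma}\,\overline{c}\,b\right)}{\mu\,\overline{\eta}\,\theta\,b},
\]
which is precisely the last assumption of the corollary. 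With $\widetilde{\cR}_0>1$ also assumed, every hypothesis of Theorem~\ref{the:main} is in force, and it provides the desired endemic $\omega$-periodic orbit of system~\eqref{eq:principal-Holling-type I}.
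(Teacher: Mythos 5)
Your proposal is correct and follows essentially the same route as the paper: reduce to Theorem~\ref{the:main}, note that S\ref{cond-1})--S\ref{cond-8}) hold trivially with $f(S,P)=S$, verify S\ref{cond-9}) by the same phase-plane argument (the autonomous system has equilibria $E_1$ and $E_2$, $\Lambda<\mu^2/a$ makes the strip $R=\{0\le x\le\mu/a,\ z\ge\alpha\}$ positively invariant, a divergence/Bendixson--Dulac argument plus Poincar\'e--Bendixson yields global asymptotic stability of $E_2$), and then substitute $f(S,P)=S$ and the constant parameters into condition~\eqref{cond:teo} to recover the corollary's inequality. The only difference is that you flag the need for uniformity in $(\eps_1,\eps_2)$ explicitly, which the paper leaves implicit.
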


To do some simulation, we consider the following particular set of parameters: $\Lambda=0.1$; $\mu=0.6$; $\beta(t)=20(1+0.9\cos(2\pi t))$; $\eta(t)=0.7(1+0.7\cos(\pi+2\pi t))$; $c(t)=0.1$; $r=0.2$; $b=0.3$; $\theta=10$, $\gamma(t)=0.1$ and $a=3$. We obtain the model:
\begin{equation}\label{eq:Periodic-Holling-type I}
\begin{cases}
S'=0.1-0.6S-20(1+0.9\cos(2\pi t))SI-3SP\\
I'=20(1+0.9\cos(2\pi t))SI-0.7(1+0.7\cos(\pi+2\pi t))PI-0.1I\\
P'=(0.2-0.3P)P+7(1+0.7\cos(\pi+2\pi t))PI+0.3SP
\end{cases},
\end{equation}

Notice that, for our model, $\Lambda=0.1>0.012=\mu^2/a$, ${\gamma}\overline{\beta}-\theta\overline{\eta} =-5<0$, $\overline{R}_0\approx 5.88 >1 + 3.3$ and $\tilde{R}_0\approx 24.8 >1$, and thus the conditions in Corollary~\ref{cor} are fulfilled. Considering the initial condition $(S_0,I_0,P_0)=(0.03567,0.02047,0.88021)$ we obtain the periodic orbit in figure~\ref{fig_periodic2-Holling-type I}.
\begin{figure}[h]
  %\centering
  \begin{minipage}[b]{.32\linewidth}
    \includegraphics[width=\linewidth]{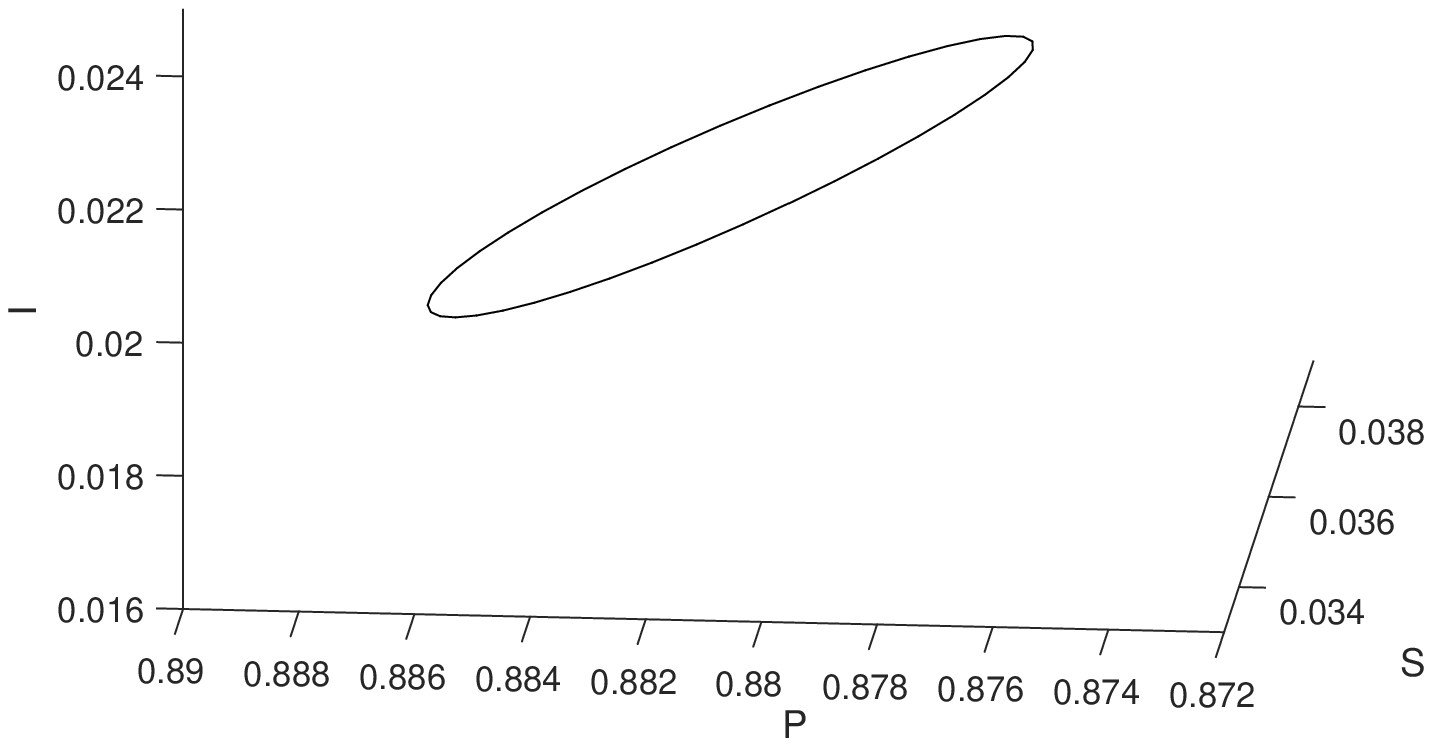}
  \end{minipage}
  \begin{minipage}[b]{.32\linewidth}
    \includegraphics[width=\linewidth]{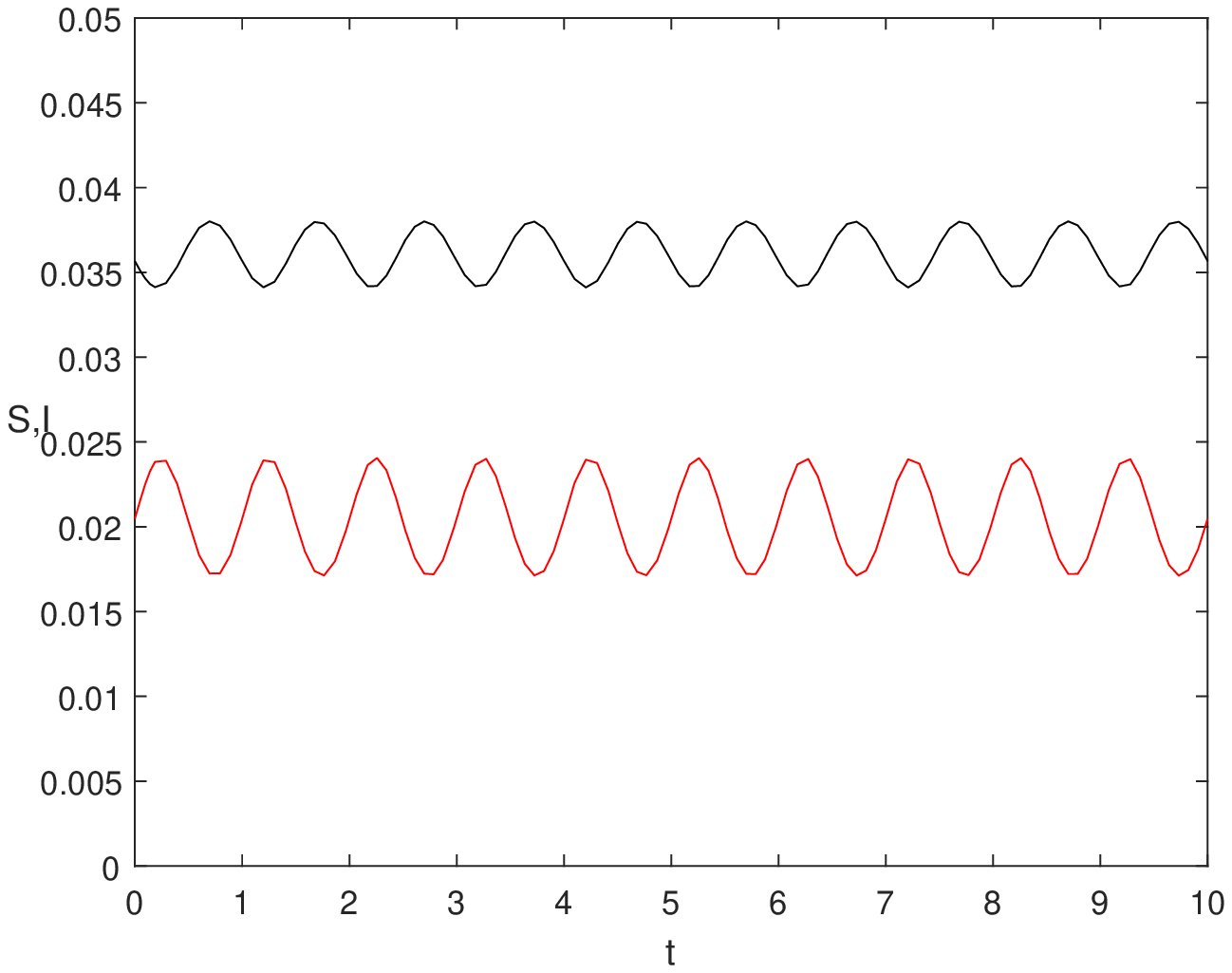}
  \end{minipage}
  \begin{minipage}[b]{.32\linewidth}
    \includegraphics[width=\linewidth]{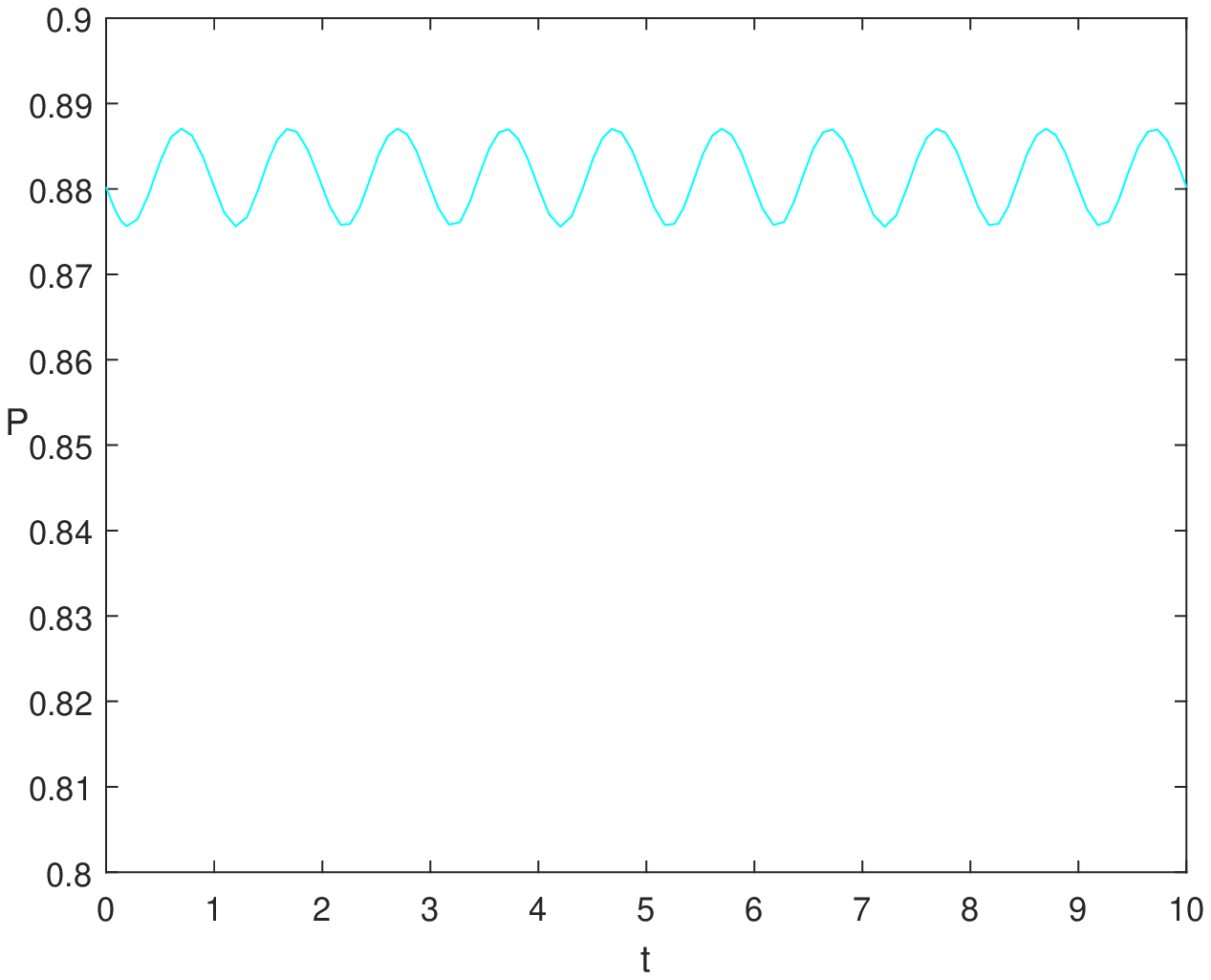}
  \end{minipage}
    \caption{Periodic orbit for model~\eqref{eq:Periodic-Holling-type I}}
      \label{fig_periodic2-Holling-type I}
\end{figure}
Although our theoretical result doesn't imply the attractivity of the periodic solution, the simulations carried out suggest that this is the case.

%-----------------------------------------------------------------------------%
%-----------------------------------------------------------------------------%

\appendix
\section{Mawhin's continuation theorem}\label{appendix:MCT}

In this appendix we state Mawhin's continuation theorem~\cite[Part IV]{Mawhin}. Let $X$ and $Z$ be Banach spaces.
\begin{definition}
A linear map $\mathcal{L}: D \subseteq X \rightarrow Z$ is called a Fredholm mapping of index zero if
\begin{enumerate}
\item $\dim \ker \mathcal{L}= \codim \imagem \mathcal{L} \leqslant \infty$;
\item $\imagem \mathcal{L}$ is closed in $Z$.
\end{enumerate}
\end{definition}
Given a Fredholm mapping of index zero $\mathcal{L}: D \subseteq X \rightarrow Z$ it is well known that there are continuous projectors $P:X \rightarrow X$ and $Q:Z \rightarrow Z$ such that:
\begin{enumerate}
\item $\imagem P=\ker \mathcal{L}$;
\item $\ker Q=\imagem \mathcal{L}=\imagem (I-Q)$;
\item $X=\ker \mathcal{L} \oplus \ker P$;
\item $Z=\imagem \mathcal{L} \oplus \imagem Q$.
\end{enumerate}
It follows that $\mathcal{L}|_{D\bigcap \ker P}:(I-P)X \rightarrow \imagem \mathcal{L}$ is invertible. We denote the inverse of that map by $\mathcal K$.

\begin{definition}
A continuous mapping $\mathcal{N}: X \rightarrow Z$ is called $\mathcal L$-compact on $\overline{U} \subset X$, where $U$ is an open bounded set, if
\begin{enumerate}
\item $QN(\overline{U})$ is bounded;
\item $\mathcal K(I-Q)\mathcal{N}:\overline{U} \rightarrow X$ is compact.
\end{enumerate}
\end{definition}
Note that, since Im $Q$ is isomorphic to ker $\mathcal{L}$, there is an isomophism $\mathcal{I}: \text{Im} Q \rightarrow \text{ker} \mathcal{L}$.
We are now prepared to state the Mawhin's continuation theorem.

\begin{theorem}[Mawhin's continuation theorem]
Let $X$ and $Z$ be Banach spaces and let $U \subset X$ be an open set. Assume that $\mathcal{L}: D \subseteq X \rightarrow Z$
 is a Fredholm mapping of index zero and let $\mathcal{N}: X \rightarrow Z$ be $\mathcal L$-compact on $\overline{U}$. Additionally, assume that
\begin{enumerate}
\item[M1)] for each $\lambda \in (0,1)$ and $x \in \partial U \cap D$ we have $\mathcal{L}x \neq \lambda\mathcal{N}x$;
\item[M2)] for each $x \in \partial U \cap \ker \mathcal{L}$ we have $Q\mathcal{N}x \neq 0$;
\item[M3)] $\deg(\mathcal{I}Q\mathcal{N},U \cap \ker \mathcal{L},0) \neq 0$.
\end{enumerate}
Then the operator equation $\mathcal{L}x=\mathcal{N}x$ has at least one solution in $D \cap \overline{U}$.
\end{theorem}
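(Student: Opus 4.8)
The plan is to carry out the standard coincidence-degree argument: rewrite $\mathcal Lx=\mathcal Nx$ as a fixed-point equation $x=M_1x$ for a compact perturbation of the identity on $\overline U$, and then show that the Leray--Schauder degree $\deg(I-M_1,U,0)$ is nonzero, which by the existence property of that degree produces a fixed point in $U\subset\overline U$. Throughout I fix the continuous projectors $P,Q$, the inverse $\mathcal K=(\mathcal L|_{D\cap\ker P})^{-1}$ (continuous by the open mapping theorem, $\imagem\mathcal L$ being closed), and the isomorphism $\mathcal I:\imagem Q\to\ker\mathcal L$ from the statement, and I set, for $\lambda\in[0,1]$,
\[
M_\lambda x=Px+\mathcal IQ\mathcal Nx+\lambda\,\mathcal K(I-Q)\mathcal Nx .
\]

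First I would record the algebraic equivalence underlying everything: for $x\in D$ and $\lambda\in(0,1]$, $\mathcal Lx=\lambda\mathcal Nx$ holds if and only if $x=M_\lambda x$. This follows by applying $Q$ and $I-P$ to $\mathcal Lx=\lambda\mathcal Nx$ and using $Q\mathcal L=0$, $\mathcal L(I-P)=\mathcal L$, $\mathcal L\mathcal Kz=z$ for $z\in\imagem\mathcal L$, and the injectivity of $\mathcal I$; in particular, the fixed points of $M_1$ in $\overline U$ are exactly the solutions of $\mathcal Lx=\mathcal Nx$ in $D\cap\overline U$. Next I would check that each $M_\lambda$ is compact on $\overline U$: here $\ker\mathcal L$ is finite dimensional, so $P$ has finite rank; $Q\mathcal N(\overline U)$ is bounded and contained in the finite-dimensional space $\imagem Q$, hence relatively compact, so $\mathcal IQ\mathcal N$ maps $\overline U$ into a relatively compact set; and $\mathcal K(I-Q)\mathcal N$ is compact precisely because $\mathcal N$ is $\mathcal L$-compact. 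Thus each $I-M_\lambda$ is an admissible argument for the Leray--Schauder degree on $U$.

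I would then invoke homotopy invariance along $\lambda\mapsto I-M_\lambda$. If $M_1$ already has a fixed point on $\partial U$ we are done, so assume not; I claim $I-M_\lambda$ has no zero on $\partial U$ for any $\lambda\in[0,1]$. Indeed, a zero $x\in\partial U\cap D$ of $I-M_\lambda$ forces $\mathcal IQ\mathcal Nx=0$ on applying $P$ (using $P^2=P$, $\mathcal IQ\mathcal Nx\in\ker\mathcal L=\imagem P$, and $\mathcal K(I-Q)\mathcal Nx\in\ker P$), hence $Q\mathcal Nx=0$ and then $\mathcal Lx=\lambda\mathcal Nx$; for $\lambda\in(0,1)$ this contradicts M1), for $\lambda=1$ it contradicts the standing assumption, and for $\lambda=0$ it gives $x\in\partial U\cap\ker\mathcal L$ with $Q\mathcal Nx=0$, contradicting M2). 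Homotopy invariance therefore yields $\deg(I-M_1,U,0)=\deg(I-M_0,U,0)$, where $M_0=P+\mathcal IQ\mathcal N$.

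Finally, since $M_0(\overline U)\subseteq\ker\mathcal L$, the reduction property of the Leray--Schauder degree identifies $\deg(I-M_0,U,0)$ with the Brouwer degree of $(I-M_0)|_{\ker\mathcal L}$ on $U\cap\ker\mathcal L$; on $\ker\mathcal L$ one has $M_0x=x+\mathcal IQ\mathcal Nx$, so $(I-M_0)|_{\ker\mathcal L}=-\mathcal IQ\mathcal N|_{\ker\mathcal L}$ and hence $\deg(I-M_0,U,0)=(-1)^{\dim\ker\mathcal L}\deg(\mathcal IQ\mathcal N,U\cap\ker\mathcal L,0)$. This last degree is well defined because M2) keeps $\mathcal IQ\mathcal N$ off $0$ on the boundary of $U\cap\ker\mathcal L$, and it is nonzero by M3); so $\deg(I-M_1,U,0)\ne0$, and the existence property of the degree delivers $x_0\in U$ with $x_0=M_1x_0$, i.e.\ $x_0\in D\cap\overline U$ solving $\mathcal Lx_0=\mathcal Nx_0$. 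The only genuinely delicate part of the argument is the bookkeeping: checking that $M_\lambda$ is compact so the Leray--Schauder degree is available at all, and arranging that M1), M2) and M3) enter at exactly the right place — on $\partial U$ for $\lambda\in(0,1)$, on $\partial U$ for $\lambda=0$, and in the finite-dimensional reduction — together with the harmless orientation sign $(-1)^{\dim\ker\mathcal L}$; the degree-theoretic inputs (homotopy invariance, reduction to a subspace, existence) are classical.
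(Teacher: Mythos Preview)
The paper does not prove this theorem at all: it is quoted in the appendix as a classical result and simply attributed to Gaines--Mawhin \cite{Mawhin}, so there is no ``paper's own proof'' to compare against. Your argument is the standard coincidence-degree proof (essentially the one in the cited reference): reformulate $\mathcal Lx=\lambda\mathcal Nx$ as the fixed-point problem $x=M_\lambda x$ with $M_\lambda=P+\mathcal IQ\mathcal N+\lambda\mathcal K(I-Q)\mathcal N$, use $\mathcal L$-compactness to make the Leray--Schauder degree available, invoke M1) and M2) to rule out zeros of $I-M_\lambda$ on $\partial U$ along the homotopy, and reduce $\deg(I-M_0,U,0)$ to the finite-dimensional Brouwer degree handled by M3). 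The bookkeeping you sketch (fixed points of $M_\lambda$ automatically lie in $D$, the projections split the equation correctly, the sign $(-1)^{\dim\ker\mathcal L}$ is harmless) is accurate, so the proposal is correct.
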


%-----------------------------------------------------------------------------%
\bibliographystyle{elsart-num-sort}

%-----------------------------------------------------------------------------%
%-----------------------------------------------------------------------------%
\end{document}